\documentclass[a4paper,11pt]{article}
\usepackage{authblk}

\usepackage[a4paper, left=1in, right=1in, top=1in, bottom=1in]{geometry}

\usepackage[skip=0.5em]{parskip}
\setlength\parindent{0pt}

\usepackage{microtype}

\usepackage[sort,nocompress]{cite}

\usepackage[title]{appendix}%
\usepackage{textcomp}%
\usepackage{manyfoot}%
\usepackage{listings}%
\usepackage{amsmath,amsthm,verbatim,amssymb,amsfonts,amscd,graphicx}
\usepackage{xcolor}
\usepackage{float}
\usepackage{bbm}
\usepackage{booktabs}
\usepackage{placeins}
\usepackage{thmtools}
\usepackage{thm-restate}
\usepackage{array}
\usepackage{makecell}
\usepackage{enumitem}
\usepackage{mathtools}
\usepackage{xspace}
\usepackage{tikz-cd}
\usepackage{tabularx}
\usepackage{multirow}
\usepackage{multicol}

\usepackage{hyperref}
\usepackage[noabbrev, capitalize]{cleveref}


\newcommand{\Set}{\mathrm{Set}}
\newcommand{\Gph}{\mathrm{Gph}}
\newcommand{\Id}{\mathbbm{1}}
\newcommand{\tensor}{\otimes}
\newcommand{\bigtensor}{\bigotimes}
\newcommand{\atensor}{\oplus}

\DeclareMathOperator{\diag}{diag}
\DeclareMathOperator{\supa}{sup}

\newcounter{dummy}\numberwithin{dummy}{section}
\newtheorem{theorem}[dummy]{Theorem}

\newtheorem{lemma}[dummy]{Lemma}
\newtheorem{definition}[dummy]{Definition}
\newtheorem{proposition}[dummy]{Proposition}
\theoremstyle{remark}

\newtheorem{example}[dummy]{Example}

\newenvironment{cleverproof}[1]
  {\begin{proof}[Proof of \Cref{#1}]}
  {\end{proof}}


\title{Monoidal Rips: Stable Multiparameter Filtrations of Directed Networks}

\author[1]{Nello Blaser}
\author[2]{Morten Brun}
\author[1]{Odin Hoff Gardaa}
\author[1,2]{Lars M. Salbu}

\affil[1]{Department of Informatics, University of Bergen, Norway}
\affil[2]{Department of Mathematics, University of Bergen, Norway}
\affil[ ]{\textit{\{nello.blaser,morten.brun,odin.garda,lars.salbu\}@uib.no}}

\date{}

\begin{document}

\maketitle
\begin{abstract}
    We introduce the monoidal Rips filtration, a filtered simplicial set for weighted directed graphs and other lattice-valued networks.
Our construction generalizes the Vietoris-Rips filtration for metric spaces by replacing the maximum operator, determining the filtration values, with a more general monoidal product.
We establish interleaving guarantees for the monoidal Rips persistent homology, capturing existing stability results for real-valued networks.
When the lattice is a product of totally ordered sets, we are in the setting of multiparameter persistence. Here, the interleaving distance is bounded in terms of a generalized network distance. We use this to prove a novel stability result for the sublevel Rips bifiltration.
Our experimental results show that our method performs better than flagser in a graph regression task, and that combining different monoidal products in point cloud classification can improve performance.
\end{abstract}
\textbf{2020 Mathematics Subject Classification:} 55N31, 62R40, 55U10.\\ 
\textbf{Keywords:} Topological Data Analysis, Directed Networks, Multiparameter Persistence, Stability, Simplicial Sets

\section{Introduction}
Many methods in topological data analysis (TDA) are based on persistent homology and focus on deriving topological descriptors from point cloud data. Common examples include filtrations derived from the \v{C}ech complex, the Vietoris-Rips complex and the alpha complex (see  \cite{chazal2021introduction} for a comprehensive overview). A point cloud in a metric space can be represented as a complete weighted graph where the nodes are the points and the edge weights are determined by the metric. In this setting, the graph is symmetric and satisfies the triangle inequality. Other types of data can naturally be represented as (weighted directed) graphs that are not necessarily symmetric or satisfy the triangle inequality. Examples include social networks, citation networks, and biological networks. A (real-valued) network is a function $G\colon V\times V\to P$ with $P$ being a subset of the extended reals, typically $\mathbb{R}$ or $[0,\infty]$. Methods for computing persistent homology of real-valued networks include directed flag complexes \cite{reimann2017cliques, lutgehetmann2020computing, caputi2021promises}, the directed Rips complex \cite{turner2019rips,mendez2023directed} and the Rips and Dowker filtrations for networks \cite{chowdhury2018functorial}. Another direction is path homology, first introduced in \cite{grigor2013path}. Path homology has been extended to persistent path homology \cite{chowdhury2018persistent}, including grounded and weighted versions \cite{chaplin2024grounded,lin2019weighted}. Computational aspects of persistent path homology are discussed in \cite{dey2022computational,dey2022efficient}. For a review of homology-based methods for networks and their applications, see \cite{aktas2019persistence}.

When working with point cloud data, simplicial complexes are typically used. While simplicial complexes are fundamental in TDA, their simplices---being sets---lack an ordering of vertices. Hence, they are not a natural choice for modeling directed structures, even though such directionality can in principle be encoded indirectly via higher-dimensional simplices. Directed simplicial complexes, which are used for the directed flag complex \cite{reimann2017cliques}, address this by introducing an ordering of the vertices in a simplex. They still disallow tuples with repeated vertices, which limits their expressiveness. For example, the cycle $w = (u, v) + (v, u)$ cannot be the boundary of any $2$-chain, causing it to persist indefinitely. In contrast, simplicial sets allow repeated vertices, enabling $w$ to be the boundary of the $2$-simplex $(u, v, u)$. This greater flexibility makes simplicial sets more suitable for analyzing directed structures. The $\ell_p$-Vietoris-Rips construction \cite{ivanov2024ell_p}, Cho's nerve construction \cite{cho} and the directed Rips complex \cite{turner2019rips} are simplicial sets.

Stability guarantees are an important part of the  theoretical appeal of TDA because they guarantee robustness to noise. The Gromov-Hausdorff distance is a popular choice for measuring the distance between compact metric spaces, including point clouds. A fundamental stability result in TDA tells us that the interleaving distance between \v{C}ech (and Vietoris-Rips) persistent homology modules built from two point clouds is bounded in terms of the Gromov-Hausdorff distance between them. See, for example, \cite[Section~5.7]{chazal2021introduction}. The network distance \cite{carlsson2013axiomatic,carlsson2014hierarchical,chowdhury2016distances,chowdhury2023distances} generalizes the Gromov-Hausdorff distance to real-valued networks. In \cite{chowdhury2023distances}, the authors provide a detailed study of the space of infinite networks, assumed compact and first countable, equipped with the network distance. Their contributions include a characterization of its metric and isomorphism structure, extensions of persistent homology methods to this setting, and convergence results for clustering methods. Many of the previously mentioned methods for real-valued networks have stability guarantees with respect to the network distance \cite{turner2019rips,chowdhury2018functorial,chowdhury2018persistent,mendez2023directed,ivanov2024ell_p}.

\subsection*{Contributions}

In this paper, we do not restrict ourselves to real-valued networks. Rather, we consider $L$-graphs, which are functions $G\colon V\times V\to L$ where $V$ is a finite set and $L$ is a complete lattice. Additionally, we require a commutative monoidal structure $(L,\tensor,e)$ that is compatible with the lattice order in the following sense:
\begin{itemize}
    \item The product is functorial: $s\tensor t\leq s'\tensor t'$ whenever $s\leq s'$ and $t\leq t'$ in $L$, and
    \item the neutral element $e$ is minimal in the sense that $e\leq t$ for all $t$ in $L$.
\end{itemize}
If $L=(L,\leq,\tensor,e)$ satisfies the above conditions, we call $L$ a \emph{symmetric monoidal lattice}. A recurring example of a symmetric monoidal lattice is $[0,\infty]$ equipped with the usual order and the following monoidal structure: For $1\leq p<\infty$, the $p$-sum $+_p\colon [0,\infty]\times [0,\infty]\to [0,\infty]$ is defined as $a+_p b=(a^p+b^p)^{1/p}$, and for $p=\infty$ we define $a+_\infty b=\sup\{a,b\}$.

Given an $L$-graph $G\colon V\times V\to L$, the \emph{monoidal Rips filtration} of $G$ is an $L$-filtered simplicial set $R^\bullet_\tensor(G)$, where the $n$-simplices are the elements in $V^{n+1}$ and their filtration values depend solely on $G$ and the product $\tensor\colon L\times L\to L$. When the monoidal product is the $\infty$-sum on $[0,\infty]$, the monoidal Rips filtration agrees with the directed Rips filtration from \cite{turner2019rips} (\cref{ex:recovering_directed_rips}). Furthermore, we recover network distance stability for the directed Rips filtration (\cref{ex:directed_rips_stability}). The $\ell_p$-Vietoris-Rips filtration from \cite{ivanov2024ell_p} is a filtered simplicial set constructed from a metric space, and is like our construction, inspired by Cho's work in \cite{cho}. We show that the $\ell_p$-Vietoris-Rips simplicial set is a special case of the monoidal Rips filtration when the $L$-graph is a metric, and the monoidal product is the $p$-sum defined above (\cref{ex:recovering_ell_p_vietoris_rips}). We also establish a Gromov-Hausdorff stability result for the $\ell_p$-Vietoris-Rips filtration that is slightly stronger than the one presented in \cite{ivanov2024ell_p} (\cref{ex:ell_p_vietoris_rips_stability}). 

When speaking of stability, we consider \emph{symmetric duoidal lattices} $L=(L,\leq,\tensor,\atensor,e)$. Here we have two symmetric monoidal products $\tensor$ and $\atensor$ on the same lattice $(L,\leq)$ sharing the same neutral element $e$. In addition, these are required to satisfy the following generalized Minkowski inequality:
\begin{equation*}
(s\atensor t)\tensor(s'\atensor t')\leq (s\tensor s')\atensor(t\tensor t')\text{ for all }s,t,s',t'\in L.
\end{equation*}

An example of a symmetric duoidal lattice is $([0,\infty],\leq, +_p,+_q,0)$ where $\leq$ is the usual ordering and $1\leq q\leq p\leq \infty$. One can also construct a symmetric duoidal lattice from any symmetric monoidal lattice $(L,\leq,\atensor,e)$ by defining the product $s\tensor t=\sup\{s,t\}$. In this case, we write $\tensor=\sup$.

A related algebraic structure known as a dioid, which features two monoidal operations, was studied in \cite{segarra2016thesis}. There, they also considers dioid spaces, that is, networks taking values in dioids. While dioids share some similarities with symmetric duoidal lattices, they differ in several key aspects: the order in a dioid is induced from one of the monoidal operations, distributivity is required, and the two operations may have distinct neutral elements, among other differences.

The product $\tensor$ is used to define the monoidal Rips filtration, while $\atensor$ is used to define interleavings and the \emph{(directed) graph distance} between $L$-graphs having the same vertex set. Given $L$-graphs $G$ and $G'$ with vertex set $V$, we define the directed graph distance
\begin{equation*}
\vec{\delta}_\atensor(G,G')=\sup_{v,v'\in V}\inf\left\{c\in L\mid G'(v,v')\leq G(v,v')\atensor c\right\}.
\end{equation*}
The graph distance $\delta_\atensor(G,G')$ is the supremum of $\vec{\delta}_\atensor(G,G')$ and $\vec{\delta}_\atensor(G',G)$. We establish the following interleaving guarantee in terms of the graph distance, denoting the $k$-fold product of $t\in L$ with respect to $\tensor$ by $t^{\tensor k}$:
\begin{restatable*}[Graph Distance Stability]{corollary}{graphdistancestability}
\label{cor:func_graph_distance_stability}
Let $(L,\leq,\tensor,\atensor,e)$ be a symmetric duoidal lattice. For any two $L$-graphs $G,G'\colon V\times V\to L$ and $n\geq 0$, the corresponding monoidal Rips persistence modules $H_n(R_\tensor^\bullet(G))$ and $H_n(R_\tensor^\bullet(G'))$ are $\delta_\atensor(G,G')^{\tensor (n+1)}$-interleaved.
\end{restatable*}

Suppose $G_1$ and $G_2$ are $L$-graphs with possibly distinct vertex sets $V_1$ and $V_2$, respectively. A correspondence between $V_1$ and $V_2$ is a subset $C$ of $V_1\times V_2$ with surjective projection maps $\pi_i\colon C\to V_i$ for $i=1,2$. We consider the induced graphs $\pi_i^*G_i\colon C\times C\to L$ defined by precomposing $G_i$ with $\pi_i$ in both factors. An $L$-graph $G$ is said to have $e$-diagonal if $G(v,v)=e$ for all vertices $v$. Based on the above result, we derive our main stability result for $L$-graphs:
\begin{restatable*}[Correspondence Stability]{theorem}{correspondencestability}
\label{thm:correspondence_stability}
Let $(L,\leq,\tensor,\atensor,e)$ be a symmetric duoidal lattice, and let $G_1\colon V_1\times V_1\to L$ and $G_2\colon V_2\times V_2\to L$ be $L$-graphs. If both $G_1$ and $G_2$ have $e$-diagonal, or $\tensor=\sup$, then for all correspondences $C$ between $V_1$ and $V_2$ with projection maps $\pi_i\colon C\to V_i$, the monoidal Rips persistence modules $H_n(R_\tensor^\bullet(G_1))$ and $H_n(R_\tensor^\bullet(G_2))$ are $\delta_\atensor(\pi_1^*G_1,\pi_2^*G_2)^{\tensor (n+1)}$-interleaved for every $n\geq0$.
\end{restatable*}

Additional assumptions on the symmetric duoidal lattice enable a discussion of multiparameter persistence for graphs. If $L$ is the $m$-fold product of a symmetric monoidal (or duoidal) lattice whose underlying lattice is totally ordered, we can define the \emph{interleaving distance} $d_I^\atensor$ on $L$-persistence modules, and the \emph{generalized network distance} $d_\mathcal{N}^\atensor$ on $L$-graphs. In this multiparameter setting, we establish the following stability result:

\begin{restatable*}[Generalized Network Distance Stability]{theorem}{generalizednetworkdistancestability}\label{thm:gen_network_dist_stability}
Let $G_1\colon V_1\times V_1\to T^m$ and $G_2\colon V_2\times V_2\to T^m$ be graphs where $(T^m,\leq,\tensor,\atensor,e)$ is the $m$-fold product of a symmetric duoidal lattice with $(T,\leq)$ a totally ordered set. If both $G_1$ and $G_2$ have $e$-diagonal, or $\tensor=\sup$, then
\begin{equation*}
d_I^\atensor(H_n(R^\bullet_\tensor(G_1)), H_n(R^\bullet_\tensor(G_2)))\leq d_\mathcal{N}^\atensor(G_1,G_2)^{\tensor (n+1)}
\end{equation*}
for all $n\geq 0$. In particular, for $\tensor=\sup$, we have
\begin{equation*}
d_I^\atensor(H_n(R^\bullet_{\sup}(G_1)), H_n(R^\bullet_{\sup}(G_2)))\leq d_\mathcal{N}^\atensor(G_1,G_2).
\end{equation*}
\end{restatable*}
As a concrete example of multipersistence, we demonstrate how the sublevel Rips bifiltration \cite{botnan2022introduction,alonso2024probabilistic,lesnick2019lecture,lesnick2015theory}, can be written as a monoidal Rips filtration over $[0,\infty]^2$. Furthermore, by applying \cref{thm:gen_network_dist_stability} to the sublevel Rips bifiltration, we bound the interleaving distance in terms of the generalized network distance. We also provide an explicit expression for the generalized network distance in this case.

We have implemented the monoidal Rips filtration for $[0,\infty]$-graphs with $p$-sum as the monoidal product (the $p$-Rips filtration). In our experiments, we combine the $p$-Rips filtration together with persistence images \cite{adams2017} in a machine learning pipeline. The first experiment is a graph regression problem, where we sample random weighted directed graphs using the Directed Random Geometric Graphs (DRGG) model from \cite{michel2029} and  estimate the original DRGG parameter $\alpha$ using LASSO regression. The second experiment concerns point cloud classification, where we obtain point clouds by iterating the linked twist map (a discrete dynamical system) using one of five different parameter values. We then fit and evaluate an ensemble classifier on persistence images following a similar experiment presented in \cite{adams2017}. Our implementation is available at \url{https://github.com/odinhg/Monoidal-Rips}.

\subsection*{Outline}
We begin by establishing the foundational concepts necessary for our discussion in \cref{sec:preliminaries}. Here, we introduce symmetric monoidal- and duoidal lattices, detailing some of their essential properties in \cref{sec:structures_and_bistructures}. Next, we review Galois connections in \cref{sec:galois_connections_and_adjunctions}, which will be useful when we later introduce the graph distance. We then define persistence modules and simplicial sets in \cref{sec:persistent_homology_modules_and_interleaving_distance} and \cref{sec:simplicial_sets}, respectively.

In \cref{sec:graphs_and_the_monoidal_rips_filtration}, we introduce $L$-graphs and the monoidal Rips filtration, and prove functoriality of this construction. We proceed to define the (directed) graph distance in \cref{sec:directed_graph_distance} and show that it has metric-like properties. The discussion in \cref{sec:stability} begins with a proof that surjections induce homotopy equivalences under specific conditions, which is presented in \cref{sec:surjections_induce_homotopy_equivalences}. We then establish our main stability results \cref{cor:func_graph_distance_stability} and \cref{thm:correspondence_stability} in \cref{sec:main_stability_results}.

In \cref{sec:multipersistence}, we work in the multiparameter setting where the underlying lattice is a product of totally ordered sets. Here we define the interleaving distance of $L$-persistence modules, and in \cref{sec:generalized_network_distance}, we define the generalized network distance and establish some of its properties. \Cref{sec:multipersistence_stability} is dedicated to the proof of \cref{thm:gen_network_dist_stability}. We end \cref{sec:multipersistence} with the example of the sublevel Rips bifiltration in \cref{sec:sublevel_rips_bifiltration}. Finally, we outline the implementation of the monoidal Rips filtration in \cref{sec:implementing_p_rips_filtration} and share our experimental results in \cref{subsec:experimental_results}.

\section{Preliminaries}\label{sec:preliminaries}

In this section, we provide the necessary definitions of symmetric monoidal lattice and symmetric duoidal lattice. We also discuss Galois connections within the context of complete lattices. Additionally, we define persistence modules and interleavings between them, along with some fundamental concepts required for working with simplicial sets.

\subsection{Symmetric Monoidal- and Duoidal Lattices}\label{sec:structures_and_bistructures}

In this section, we define two ordered algebraic structures of interest: The symmetric monoidal- and duoidal lattices. Let us briefly motivate the choice of terminology: The name \textit{symmetric monoidal lattice} comes from the more general setting of symmetric monoidal preorders discussed in \cite[Definition~2.2]{fong2018seven}. The term \textit{duoidal} is borrowed from the notion of duoidal categories \cite{nlab:duoidal_category}. In our case, we will assume that the two monoidal products share the same neutral element.

We start by giving some standard definitions of posets, lattices, and monoids. A \textbf{partially ordered set} (or \textbf{poset}) $P=(P,\leq)$ consists of a set $P$ together with a homogenous relation $\leq$ on the set $P$ that is reflexive, antisymmetric, and transitive. We say that a poset $T$ is a \textbf{totally ordered set} if every two elements are comparable, i.e., we either have $t\leq t'$ or $t'\leq t$ for all $t,t'\in T$. Given posets $(P_1,\leq_1),\dots,(P_d,\leq_d)$ we define the \textbf{product order} $\leq$ on $P_1\times\cdots\times P_d$ by $(p_1,\ldots,p_d)\leq (p_1',\ldots,p_d')$ if and only if $p_i\leq p_i'$ for all $i=1,\dots,d$.

Let $P=(P,\leq)$ be a poset and let $S\subseteq P$. An element $l\in P$ is said to be a \textbf{lower bound} of $S$ if $l\leq s$ for all $s\in S$. Similarly, $u\in P$ is an \textbf{upper bound} of $S$ if $s\leq u$ for all $s\in S$. An element $l\in P$ is a \textbf{greatest lower bound} of $S$ if $l$ is a lower bound of $S$ and for every lower bound $l'$ of $S$, we have $l'\leq l$. If a greatest lower bound of $S$ exists, it is necessarily unique, and we refer to it as the \textbf{infimum} $\inf S$ of $S$. Similarly, the \textbf{least upper bound} (or \textbf{supremum}) of $S$, donoted $\sup S$, is the upper bound of $S$ where for every upper bound $u'$ of $S$ we have $\sup S\leq u'$. The poset $P$ is a \textbf{complete lattice} if both $\inf S$ and $\sup S$ exist for every subset $S\subseteq P$.\footnote{The infimum and the supremum are sometimes referred to as the meet and the join, respectively.}

A \textbf{monoid} $M=(M,\tensor,e)$ is a tuple consisting of a (nonempty) set $M$ together with an associative binary product $\tensor\colon M\times M\to M$ and a neutral element $e\in M$. That is, we require
\begin{enumerate}[label=(M\arabic{*}), ref=(M\arabic{*}),leftmargin=5.0em]
    \item\label{cond:m1} $a\tensor(b\tensor c)=(a\tensor b)\tensor c$ and
    \item\label{cond:m2} $a\tensor e=e\tensor a=a$
\end{enumerate}
to hold for all $a,b,c\in M$. A monoid $M$ is \textbf{commutative} if $a\tensor b=b\tensor a$ for all $a,b\in M$. We define the notion of a symmetric monoidal lattice by combining the concepts of complete lattices and commutative monoids as follows:
\begin{definition}
A \textbf{symmetric monoidal lattice} is a tuple $(L, \leq, \tensor, e)$ where $(L,\leq)$ is a complete lattice and $(L,\tensor,e)$ is a commutative monoid such that
\begin{enumerate}[label=(SML\arabic{*}), ref=(SML\arabic{*}),leftmargin=5.0em]
    \item\label{cond:l1} $s\tensor t\leq s'\tensor t'$ whenever $s\leq s'$ and $t\leq t'$ in $(L,\leq)$, and
    \item\label{cond:l2} $e=\inf L$ (or, equivalently, $e\leq t$ for all $t\in L$).
\end{enumerate}
\end{definition}

The condition \ref{cond:l1} indicates that $\tensor\colon L\times L\to L$ behaves as a bifunctor when viewing $L$ as a category. We say that the monoidal product $\tensor$ is \textbf{non-decreasing} if $s\leq s\tensor t$ holds for all $s,t\in L$. In a symmetric monoidal lattice, the product is always non-decreasing. To see this, let $s,t\in L$. By reflexivity and \ref{cond:l2}, we have $s\leq s$ and $e\leq t$, respectively. Applying \ref{cond:m2} and \ref{cond:l1} we get $s=s\tensor e\leq s\tensor t$, and thus we have the following lemma:
\begin{lemma}\label{lem:monoidal_product_is_non_decreasing}
If $(L,\leq,\tensor,e)$ is a symmetric monoidal lattice, then $\tensor$ is non-decreasing.
\end{lemma}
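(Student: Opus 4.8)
The plan is to unwind the definitions and chain together the monoid unit law with the bifunctoriality axiom and the minimality of the neutral element. Fix arbitrary $s,t\in L$; the goal is to show $s\leq s\tensor t$.

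First I would record the two trivial order relations we have at hand: $s\leq s$ by reflexivity of $\leq$, and $e\leq t$ by \ref{cond:l2} (which says $e=\inf L$, hence $e$ is below every element of $L$). Next I would feed this pair of inequalities into the monotonicity/bifunctoriality axiom \ref{cond:l1}, with the substitution $s'=s$ and $t'=t$ in its first slot replaced appropriately: from $s\leq s$ and $e\leq t$ we obtain $s\tensor e\leq s\tensor t$. Finally I would rewrite the left-hand side using the unit law \ref{cond:m2}, namely $s\tensor e=s$, to conclude $s\leq s\tensor t$. Since $s,t$ were arbitrary, $\tensor$ is non-decreasing in the sense defined above.

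There is no real obstacle here — the statement is an immediate consequence of the axioms, and indeed the argument is already sketched in the paragraph preceding the lemma. The only thing to be careful about is citing the axioms in the correct order (reflexivity and \ref{cond:l2} to get the hypotheses of \ref{cond:l1}, then \ref{cond:l1}, then \ref{cond:m2} to simplify), and noting that commutativity is not needed for this direction, though it would let one symmetrically conclude $t\leq s\tensor t$ as well. I would also remark that this lemma is what licenses thinking of $\tensor$ as ``only increasing'' filtration values, which is implicitly used later when bounding monoidal Rips filtration values.
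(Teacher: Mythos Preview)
Your proof is correct and is essentially identical to the paper's own argument (given in the paragraph immediately preceding the lemma): reflexivity and \ref{cond:l2} give $s\leq s$ and $e\leq t$, then \ref{cond:l1} yields $s\tensor e\leq s\tensor t$, and \ref{cond:m2} rewrites the left side as $s$. There is nothing to add.
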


Note that $([0,\infty],\leq,+,0)$ is a symmetric monoidal lattice, whereas $(\mathbb{R},\leq,+,0)$ is not. As a shorthand notation, we denote the $k$-fold product $t\tensor t\tensor\cdots\tensor t$ by $t^{\tensor k}$. Furthermore, for any subset $S=\{s_i\}_{i\in I}\subseteq L$, we use the notation $\bigtensor_{s\in S}s$ or $\bigtensor_{i\in I}s_i$ to represent the product of all the elements $s_i\in S$. 

Later, when we discuss stability, we will consider two possibly distinct monoidal products, $\tensor$ and $\atensor$, on $L$ simultaneously. This allows us to define a filtration in terms of $\tensor$, and on the other hand, distance functions between graphs and persistence modules using $\atensor$.

\begin{definition}   
A \textbf{symmetric duoidal lattice} is a tuple $(L,\leq,\tensor,\atensor, e)$ such that
\begin{enumerate}[label=(SDL\arabic{*}), ref=(SDL\arabic{*}),leftmargin=5.0em]
    \item\label{cond:bl1} both $(L,\leq,\tensor,e)$ and $(L,\leq,\atensor,e)$ are symmetric monoidal lattices, and
    \item\label{cond:bl2} the inequality $(s\atensor t)\tensor(s'\atensor t')\leq (s\tensor s')\atensor(t\tensor t')$ holds for all $s,t,s',t'\in L$.
\end{enumerate}
\end{definition}

The condition \ref{cond:bl2} is a generalization of the Minkowski inequality for $p$-norms as we have
\begin{equation}\label{eq:minkowski_for_p_norm_counting_measure}
    \Vert a+b\Vert_p=\left(\sum_{i=1}^k\vert a_i^p+b_i^p\vert\right)^\frac{1}{p}\leq\left(\sum_{i=1}^k\vert a_i\vert^p\right)^\frac{1}{p}+\left(\sum_{i=1}^k\vert b_i\vert^p\right)^\frac{1}{p}=\Vert a\Vert_p+\Vert b\Vert_p
\end{equation}
for all $a,b\in\mathbb{R}^k$ and $1\leq p\leq\infty$. By induction, \ref{cond:bl2} is equivalent to the following inequality, analogous to \cref{eq:minkowski_for_p_norm_counting_measure}:
\begin{equation*}
    \bigtensor_{i\in I}(s_i\atensor t_i)\leq \left(\bigtensor_{i\in I}s_i\right)\atensor\left(\bigtensor_{i\in I}t_i\right), 
\end{equation*}
for all finite tuples $(s_i)_{i\in I}$ and $(t_i)_{i\in I}$ of $L$. For two symmetric duoidal lattices $(L_1, \leq_1, \tensor_1,\atensor_1, e_1)$ and $(L_2, \leq_2, \tensor_2,\atensor_2, e_2)$, we can construct the \textbf{product symmetric duoidal lattice} on $L_1\times L_2$, equipped with the product order and the coordinatewise monoidal structures. See \cref{sec:productappendix} for more details on products of symmetric monoidal- and duoidal lattices. Furthermore, we can turn any symmetric monoidal lattice $L$ into a symmetric duoidal lattice by considering the supremum $\sup:L\times L\to L$ sending $(s,t)$ to $\sup\{s,t\}$.
\begin{lemma}\label{lemma:supbistructure}
    If $(L,\leq,\atensor,e)$ is a symmetric monoidal lattice, then $(L,\leq,\sup,\atensor,e)$ is a symmetric duoidal lattice.
\end{lemma}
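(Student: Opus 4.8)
The plan is to check the two defining conditions \ref{cond:bl1} and \ref{cond:bl2} of a symmetric duoidal lattice directly for the tuple $(L,\leq,\sup,\atensor,e)$, where I write $\sup\colon L\times L\to L$ for the binary operation $(s,t)\mapsto\sup\{s,t\}$. The structure $\atensor$ plays the role of the ``additive'' product and $\sup$ the role of the ``multiplicative'' product used to build filtrations.

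For \ref{cond:bl1}, the pair $(L,\leq,\atensor,e)$ is a symmetric monoidal lattice by hypothesis, so the only thing to verify is that $(L,\leq,\sup,e)$ is one too. The underlying poset is a complete lattice by assumption, so $\sup\{s,t\}$ always exists and $\sup$ is a well-defined binary operation. I would check commutativity (immediate from $\{s,t\}=\{t,s\}$) and associativity via the standard identity $\sup\{\sup\{a,b\},c\}=\sup\{a,b,c\}=\sup\{a,\sup\{b,c\}\}$, which follows by comparing upper bounds. For neutrality of $e$, I would invoke \ref{cond:l2} for $(L,\leq,\atensor,e)$, which gives $e=\inf L$; hence $e\leq t$ and $\sup\{e,t\}=t$ for every $t\in L$. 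Condition \ref{cond:l1} for $\sup$ is just monotonicity of the supremum: if $s\leq s'$ and $t\leq t'$, then any upper bound of $\{s',t'\}$ bounds $\{s,t\}$, so $\sup\{s,t\}\leq\sup\{s',t'\}$. Finally \ref{cond:l2} for $\sup$ holds because $e=\inf L$, as just noted.

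It then remains to establish \ref{cond:bl2}, which in this instance reads
\begin{equation*}
\sup\{s\atensor t,\, s'\atensor t'\}\leq \bigl(\sup\{s,s'\}\bigr)\atensor\bigl(\sup\{t,t'\}\bigr)\qquad\text{for all }s,t,s',t'\in L.
\end{equation*}
I would prove this by bounding each of the two arguments of the left-hand supremum by the right-hand side: since $s\leq\sup\{s,s'\}$ and $t\leq\sup\{t,t'\}$, functoriality \ref{cond:l1} of $\atensor$ gives $s\atensor t\leq\sup\{s,s'\}\atensor\sup\{t,t'\}$, and symmetrically $s'\atensor t'\leq\sup\{s,s'\}\atensor\sup\{t,t'\}$; taking the supremum of the two left-hand sides yields the inequality. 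There is no genuine obstacle in this lemma; the one point deserving attention is that the unit $e$ of $\atensor$ is the bottom element of $L$, so that it also serves as the unit for $\sup$, and this is precisely what \ref{cond:l2} supplies.
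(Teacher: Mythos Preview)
Your proof is correct and follows essentially the same route as the paper: first verifying that $(L,\leq,\sup,e)$ is a symmetric monoidal lattice (using that $e=\inf L$ so $e$ is a unit for $\sup$, and that $\sup$ is monotone), and then deriving the duoidal inequality by applying \ref{cond:l1} for $\atensor$ to the bounds $s\leq\sup\{s,s'\}$, $t\leq\sup\{t,t'\}$ (and symmetrically), exactly as the paper does.
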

\begin{proof}
    We start by showing that $(L,\leq,\sup,e)$ is a symmetric monoidal lattice. Condition \ref{cond:m1} follows from the fact that $\sup$ is associative. For $t\in L$ we have $t\geq t$ and $t\geq e$, so $t\geq\sup\{t,e\}\geq t$, giving $\sup\{t,e\}=t$ and \ref{cond:m2}. The \ref{cond:l2} condition is independent of the product. For \ref{cond:l1}, consider $s\leq s'$ and $t\leq t'$. Now, $\sup\{s',t'\}$ is an upper bound for $s'$ and $t'$ and thus for $s$ and $t$. Since $\sup\{s,t\}$ is the least upper bound of $s$ and $t$ we have $\sup\{s,t\}\leq \sup\{s',t'\}$. Finally, we show the duoidal inequality \ref{cond:bl2}. Let $x=\sup\{s,s'\}$ and $y=\sup\{t,t'\}$. In particular, we have $s\leq x$ and $t\leq y$, so by \ref{cond:l1} for $\atensor$ we have $s\atensor t\leq x\atensor y$ and similarly $s'\atensor t'\leq x\atensor y$, so $x\atensor y=\sup\{s,s'\} \atensor \sup\{t,t'\}$ gives an upper bound for $s\atensor t$ and $s'\atensor t'$, and hence
    \begin{equation}\label{eq:supminkowski}
        \sup\{s\atensor t, s'\atensor t'\}\leq \sup\{s,s'\} \atensor \sup\{t,t'\}.
    \end{equation}
\end{proof}

We now consider a particular family of symmetric monoidal lattices on $[0,\infty]$ equipped with the usual ordering, where the monoidal product is given by the $\ell_p$-norm on $\mathbb{R}^2$.
\begin{definition}\label{def:p_sum}
For $p\in[1,\infty)$, the \textbf{$p$-sum} is the binary operation $+_p\colon [0,\infty]\times[0,\infty] \to[0,\infty]$ mapping $(a,b)$ to $a+_p b = (a^p+b^p)^{1/p}$. Moreover, we let the \textbf{$\infty$-sum} $a+_\infty b$ of $a$ and $b$ be the limit $\lim_{p\to\infty}a +_p b = \sup\{a,b\}$.
\end{definition}

For all $p\in[1,\infty]$, the tuple  $([0,\infty], \leq, +_p, 0)$ is a symmetric monoidal lattice. Furthermore, from Lemma \ref{lemma:supbistructure}, we get that $([0,\infty], \leq, +_\infty, +_q, 0)$ is a symmetric duoidal lattice, and this can be further refined as follows:

\begin{example}\label{ex:p_q_sum_bistructure}
If $\leq$ is the usual order on $[0,\infty]$ and $1\leq q\leq p \leq \infty$, then $([0,\infty], \leq, +_p, +_q, 0)$ is a symmetric duoidal lattice, i.e., the two monoidal products satisfy \ref{cond:bl2}. When $p,q<\infty$, this follows from \cref{eq:minkowski_for_p_norm_counting_measure} since $p/q\geq1$. Indeed, writing $r=p/q$ we have
\begin{align*}
    &(a+_q b)+_p(c+_q d)=\left((a^q+b^q)^{r}+(c^q+d^q)^{r}\right)^\frac{1}{p}=\left((a^q+b^q)+_r(c^q+d^q)\right)^\frac{r}{p}\\
    \leq&\left((a^q+_r c^q)+(b^q+_r d^q)\right)^\frac{r}{p}=\left((a^p+c^p)^\frac{q}{p}+(b^p+d^p)^\frac{q}{p}\right)^\frac{1}{q}=(a+_p c)+_q(b+_p d).
\end{align*}
If $q<p=\infty$, we write $x=(a^q, c^q)$ and $y=(b^q,d^q)$, and apply the Minkowski inequality as above to get
\begin{align*}
&(a+_q b)+_p(c+_q d)=\sup\{a^q+b^q, c^q+d^q\}^{1/q}=\Vert x+y\Vert_\infty^{1/q}\\\leq&\left(\Vert x\Vert_\infty+\Vert y\Vert_\infty\right)^{1/q}=\left(\sup\{a, c\}^q+\sup\{b, d\}^q\right)^{1/q}=(a+_p c)+_q(b+_p d).
\end{align*}
The last case where $p=q=\infty$ is straightforward:
\begin{align*}
&(a+_q b)+_p(c+_q d)=\sup\{a,b,c,d\}=(a+_p c)+_q(b+_p d).
\end{align*}
\end{example}

\subsection{Galois Connections and Adjunctions}\label{sec:galois_connections_and_adjunctions}
Galois connections are adjunctions between preorders (see \cite[Sec.\ 1.4]{fong2018seven} for a more general exposition). For us, it suffices to consider adjunctions in the setting of endomorphisms of complete lattices. That is, functions $L\to L$ where $L$ is a complete lattice.

\begin{definition}
Let $(L,\leq)$ be a complete lattice. A \textbf{Galois connection} on $L$ is a pair of order-preserving maps $\mu\colon L\to L$ and $\lambda\colon L\to L$ such that
\begin{equation*}
    \lambda(s)\leq t\iff s\leq\mu(t)
\end{equation*}
for all $s,t\in L$. We say that $\mu$ is the \textbf{right adjoint} of $\lambda$, and that $\lambda$ is the \textbf{left adjoint} of $\mu$.
\end{definition}

In general, the adjoint of a function is unique whenever it exists. In a complete lattice $L$, left adjoints always exist, as we can construct the left adjoint $\lambda\colon L\to L$ of any order-preserving function $\mu\colon L\to L$ as follows:
\begin{equation}\label{eq:left_adjoint_as_inf}
    \lambda(s)=\inf\{c\in L\mid s\leq\mu(c)\}.
\end{equation}
Since $\inf A\leq\inf B$ whenever $B\subseteq A\subseteq L$, it follows that $\lambda\colon L\to L$ defined in \cref{eq:left_adjoint_as_inf} is order-preserving. A similar formula for the right adjoint of any order-preserving function can be expressed in terms of the supremum. Thus, in a complete lattice, adjoints always exist. An order-preserving map $\mu\colon L\to L$ is a right adjoint if and only if it preserves infimums \cite[Thm.~1.115]{fong2018seven}, that is, if $\inf\{\mu(s)\mid s\in S\}=\mu(\inf S)$ for all subsets $S\subseteq L$. Consequently, all order-preserving functions $L\to L$ preserve both infimums and supremums. Note that the map $s\mapsto s\tensor s$ is order-preserving. We state the following consequence of the above discussion for later use:
\begin{lemma}\label{lem:inf_product_compatible}
Let $(L,\leq,\tensor,e)$ be a symmetric monoidal lattice. For any subset $S\subseteq L$, we have
\begin{equation*}
    \inf_{s\in S}\{s\tensor s\}=\inf S\tensor \inf S.
\end{equation*}
\end{lemma}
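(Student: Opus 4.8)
The plan is to read the identity as the statement that the order-preserving map $\phi\colon L\to L$, $\phi(s)=s\tensor s$, preserves the infimum of $S$. So first I would record monotonicity of $\phi$: if $s\leq s'$ then \ref{cond:l1}, applied with equal arguments in both slots, gives $s\tensor s\leq s'\tensor s'$ --- this is exactly the remark made immediately before the statement. Everything then reduces to a single preservation property.

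One inequality is immediate. Since $\inf S\leq s$ for every $s\in S$, monotonicity of $\phi$ gives $\inf S\tensor\inf S\leq s\tensor s$ for all $s\in S$, so $\inf S\tensor\inf S$ is a lower bound of $\{s\tensor s\mid s\in S\}$ and hence $\inf S\tensor\inf S\leq\inf_{s\in S}\{s\tensor s\}$. For the reverse inequality I would invoke the fact recalled in \cref{sec:galois_connections_and_adjunctions} that every order-preserving endomorphism of a complete lattice is a right adjoint (its left adjoint being given by \eqref{eq:left_adjoint_as_inf}) and therefore preserves all infima; applied to $\phi$ and the set $S$ this yields $\phi(\inf S)=\inf_{s\in S}\phi(s)$, i.e.\ $\inf_{s\in S}\{s\tensor s\}\leq\inf S\tensor\inf S$, and together with the previous paragraph this is the claim. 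If one prefers an argument not routed through the general fact, one can set $\lambda(c)=\inf\{t\in L\mid c\leq t\tensor t\}$: any lower bound $c$ of $\{s\tensor s\mid s\in S\}$ satisfies $\lambda(c)\leq\inf S$ (each $s\in S$ lies in the set defining $\lambda(c)$), and $c\leq\lambda(c)\tensor\lambda(c)$ by the defining property of the Galois connection $\lambda\dashv\phi$, whence $c\leq\lambda(c)\tensor\lambda(c)\leq\inf S\tensor\inf S$ by \ref{cond:l1}; taking $c=\inf_{s\in S}\{s\tensor s\}$ finishes it.

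The step that carries all the content is this reverse inequality $\inf_{s\in S}\{s\tensor s\}\leq\inf S\tensor\inf S$; this is where completeness of $L$ genuinely enters (the corresponding statement can fail for order-preserving maps of arbitrary posets), but it is dispatched by the Galois-connection machinery of the previous subsection, so the remaining work is only bookkeeping --- including the degenerate case $S=\emptyset$, where both sides equal $\top=\inf\emptyset$ because $\top\tensor\top=\top$ by \cref{lem:monoidal_product_is_non_decreasing}.
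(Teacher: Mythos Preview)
Your argument follows the paper's own reasoning verbatim: you invoke the claim from \cref{sec:galois_connections_and_adjunctions} that every order-preserving self-map of a complete lattice is a right adjoint and hence preserves all infima. That claim, however, is false, and in fact the lemma as stated admits a counterexample. The formula \eqref{eq:left_adjoint_as_inf} always defines an order-preserving map $\lambda$, but the pair $(\lambda,\mu)$ is a genuine Galois connection only if $\mu$ already preserves infima: the implication $\lambda(s)\leq t\Rightarrow s\leq\mu(t)$ requires the unit inequality $s\leq\mu(\lambda(s))$, and $\mu(\lambda(s))=\mu\bigl(\inf\{c:s\leq\mu(c)\}\bigr)$ equals $\inf\{\mu(c):s\leq\mu(c)\}\geq s$ only when $\mu$ commutes with that particular infimum. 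Your alternative route has precisely the same circularity: the step ``$c\leq\lambda(c)\tensor\lambda(c)$ by the defining property of the Galois connection $\lambda\dashv\phi$'' presupposes that $\lambda\dashv\phi$ is already a Galois connection, which is exactly what is in question.

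Here is a four-element counterexample. Let $L=\{0,a,b,1\}$ with $0<a<1$, $0<b<1$, and $a,b$ incomparable; this is a complete lattice. Put $x\tensor y=1$ whenever $x\neq 0$ and $y\neq 0$, and let $0$ be the unit. One checks directly that $(L,\leq,\tensor,0)$ satisfies \ref{cond:m1}, \ref{cond:m2}, \ref{cond:l1} and \ref{cond:l2}. For $S=\{a,b\}$ we have $\inf S=0$, so $\inf S\tensor\inf S=0$, whereas $\{s\tensor s:s\in S\}=\{1\}$ has infimum $1$; the identity fails. (Tracing your alternative argument in this example: for $c=1$ the set $\{t:1\leq t\tensor t\}$ is $\{a,b,1\}$, so $\lambda(1)=0$ and $\lambda(1)\tensor\lambda(1)=0\not\geq 1$.) Where the paper actually uses the lemma, the infimum is over a finite subset of a totally ordered lattice and hence is attained, so plain monotonicity of $\phi$ already gives the conclusion; but the general statement you were asked to prove cannot be established without an extra hypothesis of that sort.
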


\subsection{Persistence Modules and Interleavings}\label{sec:persistent_homology_modules_and_interleaving_distance}

Persistence modules are commonly indexed over $\mathbb{R}^m$ or a subset thereof. We choose to work in the setting where we replace $\mathbb{R}^m$ and coordinatewise addition with a symmetric monoidal lattice.

For a fixed field $\mathbb{K}$ and a poset $P=(P,\leq)$, a \textbf{$P$-persistence module} (or simply a \textbf{persistence module} when $P$ is fixed) is a functor $H:P\to\operatorname{Vect}_{\mathbb{K}}$. It consists of a collection $\{H_t\}_{t\in P}$ of $\mathbb{K}$-vector spaces together with $\mathbb{K}$-linear \textbf{structure maps} $\{h_{s,t}\colon H_s\to H_t\}_{s\leq t}$ such that $h_{t,t}=\Id_{H_t}$ and $h_{r,t}=h_{s,t}\circ h_{r,s}$ for all $r\leq s\leq t$ in $P$. We allow ourselves to simply write $h$ instead of $h_{s,t}$ if the context is clear.

Let $(L, \leq, \atensor, e)$ be a symmetric monoidal lattice. By an $L$-persistence module, we mean a persistence module over the complete lattice $(L,\leq)$. For $\delta, \epsilon\in L$, a \textbf{$(\delta, \epsilon)$-interleaving} between two $L$-persistence modules $H$ and $H'$ consists of two collections of $\mathbb{K}$-linear maps $F=\{F_t\colon H_t\to H'_{t\atensor\delta}\}_{t\in L}$ and $G=\{G_t\colon H'_t\to H_{t\atensor\epsilon}\}_{t\in L}$ such that the following diagrams commute for all $s\leq t$ in $L$:

\begin{center}
\begin{tikzcd}[column sep=3em]
H_s \arrow[r, "h"] \arrow[d, "F_s"'] & H_t \arrow[d, "F_t"] \\
H'_{s\atensor\delta} \arrow[r, "h'"'] & H'_{t\atensor\delta} 
\end{tikzcd}
\begin{tikzcd}[column sep=3em]
H'_s \arrow[r, "h'"] \arrow[d, "G_s"'] & H'_t \arrow[d, "G_t"] \\
H_{s\atensor\epsilon} \arrow[r, "h"']   & H_{t\atensor\epsilon} 
\end{tikzcd}

\begin{tikzcd}[column sep=1em]
H_t \arrow[rd, "F_t"'] \arrow[rr, "h"] & & H_{t\atensor\delta\atensor\epsilon} \\
 & H'_{t\atensor\delta} \arrow[ru, "G_{t\atensor\delta}"'] & 
\end{tikzcd}
\begin{tikzcd}[column sep=1em]
H'_t \arrow[rd, "G_t"'] \arrow[rr, "h'"] & & H'_{t\atensor\epsilon\atensor\delta} \\
 & H_{t\atensor\epsilon} \arrow[ru, "F_{t\atensor\epsilon}"'] & .
\end{tikzcd}
\end{center}

If such an interleaving exists, we say that $H$ and $H'$ are \textbf{$(\delta, \epsilon)$-interleaved} (with respect to $\atensor$). We refer to a $(\delta,\delta)$-interleaving simply as a $\delta$-interleaving, and say that $H$ and $H'$ are $\delta$-interleaved.

Our notion of interleaving can be viewed as a special case of the general framework introduced in \cite{bubenik2015metrics} and further developed in \cite{de2017theory}. In this more general setting, one considers persistence modules as functors $P\to D$ where $P$ is a preordered set and $D$ is an arbitrary category. Given $\epsilon\in L$, we can consider the order-preserving map $\Omega_\epsilon\colon L\to L$ defined as $\Omega_\epsilon(t)=t\atensor\epsilon$ which satisfies $t\leq\Omega_\epsilon(t)$ for all $t\in L$. In the terminology of \cite{bubenik2015metrics}, an $(\Omega_\delta, \Omega_\epsilon)$-interleaving is precisely what we refer to as a $(\delta,\epsilon)$-interleaving.

The following lemma will be needed for proving \cref{cor:func_graph_distance_stability}:
\begin{lemma}\label{lem:max_interleaved}
    Let $(L, \leq, \atensor, e)$ be a symmetric monoidal lattice. If $H$ and $H'$ are $(\delta, \epsilon)$-interleaved, and $l,m\in L$ with $\delta\leq l$ and $\epsilon\leq m$, then $H$ and $H'$ are $(l,m)$-interleaved. In particular, $H$ and $H'$ are $\sup\{\delta, \epsilon\}$-interleaved.
\end{lemma}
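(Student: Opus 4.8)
The idea is to extend a given $(\delta,\epsilon)$-interleaving to an $(l,m)$-interleaving by post-composing the interleaving maps with the appropriate structure maps of the persistence modules. Suppose $F = \{F_t\colon H_t \to H'_{t\atensor\delta}\}$ and $G = \{G_t\colon H'_t \to H_{t\atensor\epsilon}\}$ form a $(\delta,\epsilon)$-interleaving. Since $\delta \leq l$, functoriality of $\atensor$ (condition \ref{cond:l1}) gives $t\atensor\delta \leq t\atensor l$ for every $t\in L$, so there is a structure map $h'_{t\atensor\delta,\,t\atensor l}\colon H'_{t\atensor\delta}\to H'_{t\atensor l}$. Define $\widetilde F_t = h'_{t\atensor\delta,\,t\atensor l}\circ F_t\colon H_t\to H'_{t\atensor l}$, and analogously $\widetilde G_t = h_{t\atensor\epsilon,\,t\atensor m}\circ G_t\colon H'_t\to H_{t\atensor m}$ using $\epsilon\leq m$.

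The plan is then to verify the four interleaving diagrams for $(\widetilde F,\widetilde G)$. For the two squares (compatibility of $\widetilde F$, resp. $\widetilde G$, with the internal maps $h$, $h'$), I would paste the original commuting square for $F$ on top of the naturality square for the structure maps $h'$ — i.e. $h'_{s\atensor l,\,t\atensor l}\circ h'_{s\atensor\delta,\,s\atensor l} = h'_{s\atensor\delta,\,t\atensor l} = h'_{t\atensor\delta,\,t\atensor l}\circ h'_{s\atensor\delta,\,t\atensor\delta}$, which holds by the composition law of the persistence module $H'$ — and conclude by diagram chasing. For the two triangles, I would use that $\widetilde G_{t\atensor l}\circ \widetilde F_t$ factors as $h_{\cdots}\circ G_{t\atensor l}\circ h'_{t\atensor\delta,\,t\atensor l}\circ F_t$; using the square for $G$ to slide $h'_{t\atensor\delta,\,t\atensor l}$ past $G$, this becomes a composite of $G_{t\atensor\delta}\circ F_t = h_{t,\,t\atensor\delta\atensor\epsilon}$ (the original triangle) with further structure maps of $H$, and the composition law collapses everything to $h_{t,\,t\atensor l\atensor m}$, as required. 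The symmetric triangle is handled the same way. Throughout, the only nontrivial inputs are \ref{cond:l1} (to know the relevant structure maps exist and that indices compare correctly) and the functoriality axioms of persistence modules.

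The final sentence ("In particular, $H$ and $H'$ are $\sup\{\delta,\epsilon\}$-interleaved") is immediate: set $l = m = \sup\{\delta,\epsilon\}$, which satisfies $\delta \leq \sup\{\delta,\epsilon\}$ and $\epsilon \leq \sup\{\delta,\epsilon\}$ by definition of the supremum, and apply the first part.

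I do not expect any real obstacle here; the lemma is essentially bookkeeping. The one point requiring mild care is making sure the index arithmetic is consistent — e.g. that $t\atensor\delta\atensor\epsilon \leq t\atensor l\atensor m$ so that the triangle's target structure map $h_{t\atensor\delta\atensor\epsilon,\,t\atensor l\atensor m}$ makes sense — which again follows by applying \ref{cond:l1} twice. I would present the argument compactly, proving one square and one triangle in detail and noting the other two are symmetric.
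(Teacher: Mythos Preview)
Your approach is correct and is exactly the one taken in the paper: post-compose the given interleaving maps with the structure maps coming from $t\atensor\delta \leq t\atensor l$ and $t\atensor\epsilon \leq t\atensor m$ (via \ref{cond:l1}), then observe that the required diagrams commute. The paper is actually terser than your plan---it simply asserts ``it can be checked'' for the four diagrams---so your outline of the square/triangle verification is more detailed than what the authors provide.
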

\begin{proof}
For all $t\in L$, we have interleaving maps $F_t\colon H_t\to H'_{t\atensor\delta}$ and $G_t\colon H'_t\to H_{t\atensor\epsilon}$, and we have $t\leq t$. Since $\delta\leq l$ and $\epsilon\leq m$ we get by \ref{cond:l1} for $\atensor$ that $t\atensor\delta\leq t\atensor l$ and $t\atensor\epsilon\leq t\atensor m$. So, we have structure maps $h\colon H_{t\atensor\epsilon}\to H_{t\atensor m}$ and $h'\colon H'_{t\atensor\delta}\to H'_{t\atensor l}$. It can be checked that by defining $F'_t=h'\circ F_t$ and $G'_t=h\circ G_t$ we get an $(l,m)$-interleaving between $H$ and $H'$.
\end{proof}

\subsection{Simplicial Sets}\label{sec:simplicial_sets}
We now define simplicial sets and some related concepts. The \textbf{simplex category} $\Delta$ has objects $[n]=\{0,\dots,n\}$ for integers $n\geq 0$ and order-preserving functions $[m]\to[n]$ as morphisms. For $n\geq0$ and $i\in[n]$, the \textbf{coface map} $d^i\colon [n-1]\to[n]$ is the injective map that hits every element except $i$, and the \textbf{codegeneracy map} $s^i\colon [n+1]\to[n]$ is the surjective map that hits $i$ twice.

\begin{definition}
    A \textbf{simplicial set} $X$ is a functor $X\colon \Delta^{op}\to\Set$, and morphisms of simplicial sets (or \textbf{simplicial maps}) are natural transformations.
\end{definition}

We write $X_n$ for the set of simplices $X([n])$, and for an order-preserving map $\alpha\colon [m]\to[n]$ we write $\alpha^*$ for the function $X(\alpha)\colon X_n\to X_m$. Since the coface and codegeneracy maps generate all morphisms in $\Delta$ (see, for example, \cite[p.~177]{mac2013categories}), we can by functoriality fully describe all functions $X(\alpha)$ by the \textbf{face maps} $d_i=X(d^i)\colon X_n\to X_{n-1}$ and \textbf{degeneracy maps} $s_i=X(s^i)\colon X_n \to X_{n+1}$. A simplex is \textbf{non-degenerate} if it is not in the image of any degeneracy map. A \textbf{simplicial subset} $Y\subseteq X$ is a simplicial set where $Y_n\subseteq X_n$ for all $n\geq 0$ and $Y(\alpha)=X(\alpha)|_{Y_n}$ for all order-preserving functions $\alpha\colon [m]\to[n]$. Given a simplicial set $X$, we let $\operatorname{Sub}(X)$ be the category of simplicial subsets of $X$ with morphisms given by inclusions.

Let $P$ be a poset. A \textbf{$P$-filtration} of a simplicial set $X$ is a functor $P\to\operatorname{Sub}(X)$ from $P$ to the category of simplicial subsets of $X$. Put differently, it is a family of simplicial sets $\{X^t\}_{t\in P}$ such that $X^s\subseteq X^t$ whenever $s\leq t$ in $P$. Given a simplicial set $X$, we can for each dimension $n\geq 0$ define the \textbf{$n$-skeleton} $\operatorname{sk}_n X$, a simplicial subset of $X$ whose $k$-simplices are images of simplices of dimension $n$ or lower. That is,
\begin{equation*}
    \operatorname{sk}_n(X)_k=\{\sigma \in X_k \,|\, \sigma=\alpha^*\tau \textrm{ for some }m\leq n, \tau\in X_m, \alpha\in\Delta([k],[m])\}.
\end{equation*}
Note that $\operatorname{sk}_n(X)_k=X_k$ if $n\geq k$. Furthermore, if $k>n$, then every $k$-simplex in the $n$-skeleton is a degenerate $n$-simplex. In other words, if $\sigma\in\operatorname{sk}_n(X)_k$ for $k>n$, then  $\sigma = S(\tau)$ for some $\tau \in X_n $ where $S\colon X_n\to X_k$ is a composition of degeneracy maps. Our main construction, the monoidal Rips filtration defined in the next section, is a filtration of the singular simplex $EV$ defined as follows:

\begin{definition}
    \sloppy Let $V$ be a finite set. The \textbf{singular $V$-simplex} is the simplicial set $EV$, whose $n$-simplices $y$ are $n+1$-tuples $(v_0,\dots,v_n)$ in $ V^{n+1}$. The simplicial structure is given by the face maps $d_i(y)=(v_0,\dots,\widehat{v_i},\dots,v_n)$ omitting the $i$-th entry, and the degeneracy maps $s_j(y)=(v_0,\dots,v_j,v_j,\dots,v_n)$ duplicating the $j$-th entry. Alternatively, $(EV)_n\cong\operatorname{map}([n],V)$.
\end{definition}
Note that a function $f\colon V\to V'$ induces a simplicial map $f_*\colon EV\to EV'$ defined by sending an $n$-simplex $(v_0,\dots,v_n)$ to $(f(v_0),\dots,f(v_n))$.

Starting with a simplicial complex $(K,V)$, we can construct the simplicial subset $\operatorname{Sing}(K)\subseteq EV$ where
\begin{equation*}
\operatorname{Sing}(K)_n = \left\{(v_0,\ldots,v_n)\mid \{v_0,\ldots, v_n\}\in K\right\}.
\end{equation*}

The simplicial set $\operatorname{Sing}(K)$ has the same topological information as the simplicial complex $K$:
\begin{lemma}[See {\cite[Corollary~6.7]{brun2023dowker} or \cite{camarena}}]\label{lem:geometric_realizations_homotopy_equivalent}
Let $K$ be a simplicial complex. There exists a homotopy equivalence $\varphi\colon|\operatorname{Sing}(K)|\to|K|$ between the geometric realizations of $\operatorname{Sing}(K)$ and $K$, and $\varphi$ is natural in $K$.
\end{lemma}

We use the above lemma in \cref{sec:sublevel_rips_bifiltration} when discussing the sublevel Rips bifiltration in the context of our work.

Similar to simplicial complexes, one can define homology for simplicial sets. For a simplicial set $X$ and a non-negative integer $k$, we denote the $k$-th homology group of $X$ by $H_k(X)$. See \cite[Chapter~1]{goerss2009simplicial} for a detailed exposition on the homology of simplicial sets with coefficients in an Abelian group. Persistent homology of a filtered simplicial set is then defined the usual way where $H_k(X^\bullet)$ is the persistence module mapping $t\mapsto H_k(X^t)$ and the structure maps $h_{s,t}\colon H_k(X^s)\to H_k(X^t)$ for $s\leq t$ are given as the images of the inclusions $X^s\subseteq X^t$ under the functor $H_k$.

\section{\texorpdfstring{$L$}{L}-graphs and the Monoidal Rips Filtration}\label{sec:graphs_and_the_monoidal_rips_filtration}

We now define $L$-graphs for a symmetric monoidal lattice $L$, and introduce the monoidal Rips filtration for such graphs.

\begin{definition}
Let $(L, \leq, \tensor, e)$ be a symmetric monoidal lattice and let $V$ be a finite set. An \textbf{$L$-graph} (or \textbf{graph}) $G$ with vertex set $V$ is a function $G\colon V\times V\to L$. We say that $G$ has \textbf{$e$-diagonal} if $G(v,v)=e$ for all $v\in V$.
\end{definition}
The collection of all $L$-graphs forms a category $\Gph(L)$ with morphisms defined as follows: A morphism $f\colon G_1\to G_2$ between two graphs $G_1\colon V_1\times V_1\to L$ and $G_2\colon V_2\times V_2\to L$ is given by a function $f\colon V_1\to V_2$ on the vertex sets such that $G_2(f(v_1), f(v_1'))\leq G_1(v_1,v_1')$ for all $v_1,v_1'\in V_1$. If $G\colon V\times V\to L$ is a graph and $X$ is a finite set, then functions $f\colon X\to V$ induce graphs $f^*G\colon X\times X\to L$ where $f^*G(x,x')=G(f(x),f(x'))$ for all $x,x'\in X$. By definition, the function $f$ induces a morphism of $L$-graphs
\begin{equation}\label{eq: f_induced_on_f*G}
    f:f^*G\to G.
\end{equation}

Given a graph $G\colon V\times V\to L$ and thinking of $L$ as a constant simplicial set, we define the simplicial map $\mathcal{L}_\tensor^G\colon EV\to L$ assigning to each tuple $(v_0,\ldots, v_n)$ the value
\begin{equation*}
    \mathcal{L}_\tensor^G(v_0,\ldots, v_n) =
    \begin{cases}
        G(v_0,v_0) &\text{if } v_0=v_1=\dots=v_n\text{ and}\\
        \bigtensor_{v_{i-1}\neq v_{i}}G(v_{i-1},v_{i})&  \text{otherwise.}
    \end{cases}
\end{equation*}

The notation $\bigtensor_{v_{i-1}\neq v_{i}}G(v_{i-1},v_{i})$ refers to the product $\bigtensor_{s\in S}s$ where $S$ is the set $S=\{G(v_{i-1},v_i)\mid i\in\{1,\ldots,n\}\text{ and } v_{i-1}\neq v_i\}$. We will use the function $\mathcal{L}_\tensor^G$ to define the filtration value of the simplices in the monoidal Rips filtration. Note that if $G\colon V\times V\to L$ has $e$-diagonal, then the above formula reduces to $\mathcal{L}^G_\tensor(v_0,\ldots,v_n)=\bigtensor_{i=1}^nG(v_{i-1},v_i)$. We consider the functor $R_\tensor\colon \Delta^{op}\times L\times \Gph(L)\to \Set$ sending an object $([n],t,G\colon V\times V\to L)$ to the subset
\begin{equation*}
    R^t_\tensor(G)_n = \left\{y=(v_0,\dots,v_n) \;\middle|\; \mathcal{L}^G_\tensor(\alpha^*y)\leq t \text{ for all }\alpha\in\Delta([m],[n])\right\}\subseteq V^{n+1}.
\end{equation*}

Let us explain how $R_\tensor$ acts on morphisms: If $s\leq t$ in $L$, then we have an inclusion of sets $R^s_\tensor(G)_n\subseteq R^t_\tensor(G)_n$ for all $n$ and $G$. Likewise, an order-preserving map $\alpha\colon [m]\to[n]$, gives a well-defined (natural) function $\alpha^*\colon R^t_\tensor(G)_n\to R^t_\tensor(G)_m$, since $y\in R^t_\tensor(G)_n$ implies that $\mathcal{L}_\tensor^G(\alpha^*y)\leq t$. We define the image of a morphism $f\colon G_1\to G_2$ of $L$-graphs under $R_\tensor$ to be the function $(v_0,\ldots, v_n)\mapsto (f(v_0), \ldots, f(v_n))$.

\begin{proposition}\label{prop:R_tensor_is_a_functor}
For a fixed symmetric monoidal lattice $(L, \leq, \tensor, e)$, we have defined a functor $R_\tensor\colon \Delta^{op}\times L\times \Gph(L)\to \Set$.
\end{proposition}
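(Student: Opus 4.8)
The plan is to verify the three functoriality conditions directly from the definitions: (i) that $R_\tensor$ is well-defined on objects, meaning $R^t_\tensor(G)_n$ is genuinely a subset of $V^{n+1}$ closed under the simplicial structure so that it assembles into a simplicial set for each fixed $t$ and $G$; (ii) that $R_\tensor$ respects composition of morphisms in each of the three coordinates $\Delta^{op}$, $L$, and $\texttt{Gph}(L)$; and (iii) that the three families of morphisms interact compatibly, i.e.\ the assigned functions commute so that the product category's morphisms (which are triples) are sent to well-defined functions. Most of this is bookkeeping, and the excerpt has already done the hard parts in the two paragraphs preceding the proposition.

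First I would treat the $\Delta^{op}$-coordinate. For fixed $t$ and $G$, I need $\alpha^*\colon R^t_\tensor(G)_n\to R^t_\tensor(G)_m$ to be well-defined for every order-preserving $\alpha\colon [m]\to[n]$: if $y\in R^t_\tensor(G)_n$, then by definition $\mathcal{L}^G_\tensor(\beta^*y)\leq t$ for \emph{all} $\beta\in\Delta([k],[n])$; in particular for $\beta=\alpha\circ\gamma$ with $\gamma\in\Delta([k],[m])$, so $\mathcal{L}^G_\tensor(\gamma^*(\alpha^*y))=\mathcal{L}^G_\tensor((\alpha\gamma)^*y)\leq t$, which says exactly $\alpha^*y\in R^t_\tensor(G)_m$. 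Functoriality in $\alpha$ (identities to identities, composites to composites) is then inherited from $EV$, since $R^t_\tensor(G)$ is a simplicial subset of $EV$ with the restricted structure maps. The clause ``for all $\alpha$'' in the definition of $R^t_\tensor(G)_n$ is precisely what makes this downward closure work — without it, restricting a simplex below $t$ could still produce a degeneracy or face above $t$.

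Next the $L$-coordinate: a morphism $s\leq t$ in $L$ gives the inclusion $R^s_\tensor(G)_n\subseteq R^t_\tensor(G)_n$, immediate from $\mathcal{L}^G_\tensor(\alpha^*y)\leq s\leq t$, and these inclusions compose correctly and send $\mathrm{id}_t$ to the identity inclusion. For the $\texttt{Gph}(L)$-coordinate, a morphism $f\colon G_1\to G_2$ (a vertex map with $G_2(f(v),f(v'))\leq G_1(v,v')$) is sent to $f_*\colon (v_0,\dots,v_n)\mapsto(f(v_0),\dots,f(v_n))$; I must check this lands in $R^t_\tensor(G_2)_n$ whenever the source lies in $R^t_\tensor(G_1)_n$. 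The key inequality is $\mathcal{L}^{G_2}_\tensor(f_*y)\leq\mathcal{L}^{G_1}_\tensor(y)$: on a tuple where consecutive entries $v_{i-1}\neq v_i$ get collapsed by $f$ the corresponding factor $G_2(f(v_{i-1}),f(v_i))$ may drop out of the product entirely (it could equal $e$-like behavior after collapse, but in any case it is a product over a possibly smaller index set of possibly smaller terms), and by \ref{cond:l1} together with $\tensor$ being non-decreasing (\cref{lem:monoidal_product_is_non_decreasing}) the product only decreases; the constant-tuple case is similar. Since $f_*$ commutes with face and degeneracy maps on $EV$, it commutes with all $\alpha^*$, so $\mathcal{L}^{G_1}_\tensor(\alpha^* y)\leq t$ for all $\alpha$ forces $\mathcal{L}^{G_2}_\tensor(\alpha^*(f_* y))=\mathcal{L}^{G_2}_\tensor(f_*(\alpha^* y))\leq\mathcal{L}^{G_1}_\tensor(\alpha^* y)\leq t$. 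Finally I would note that the three assigned operations — restriction along $\alpha$, inclusion along $s\leq t$, and $f_*$ — pairwise commute (all are ``apply a map to the tuple / forget structure'', so this is formal), which is exactly what is needed for a morphism $(\alpha,s\leq t,f)$ in the product category to be sent to a well-defined function, and composition in the product category is handled componentwise.

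The main obstacle, such as it is, is the monotonicity estimate $\mathcal{L}^{G_2}_\tensor(f_*y)\leq\mathcal{L}^{G_1}_\tensor(y)$ in the degenerate situation where $f$ identifies some but not all consecutive vertices, since then the index set of the defining product changes rather than just the terms; handling the edge cases (all vertices collapse to one, versus partial collapse, versus the constant-tuple branch of the definition of $\mathcal{L}$) cleanly is the one place requiring a small case analysis, and it relies essentially on $\tensor$ being non-decreasing so that dropping factors only helps.
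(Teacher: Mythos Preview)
Your proposal is correct and follows essentially the same approach as the paper: verify functoriality in each coordinate separately, with the only substantive work being the $\texttt{Gph}(L)$-coordinate, where one must check that $f_*$ carries $R^t_\tensor(G_1)$ into $R^t_\tensor(G_2)$ via a case analysis on whether $f$ collapses the tuple, using \ref{cond:l1} and the non-decreasing property of $\tensor$. One minor difference: in the case where $f_*y$ is a constant tuple but $y$ is not, the paper bounds $G_2(f(v_0),f(v_0))\leq G_1(v_0,v_0)=\mathcal{L}^{G_1}_\tensor(\iota_0^*y)\leq t$ directly via the vertex face $\iota_0^*y$, whereas your framing via the single inequality $\mathcal{L}^{G_2}_\tensor(f_*y)\leq\mathcal{L}^{G_1}_\tensor(y)$ requires instead observing $G_2(f(v_0),f(v_0))=G_2(f(v_{i-1}),f(v_i))\leq G_1(v_{i-1},v_i)\leq\mathcal{L}^{G_1}_\tensor(y)$ for some $i$ with $v_{i-1}\neq v_i$; both routes work.
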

\begin{proof}
    We discussed functoriality in $\Delta^{op}$ and $L$  above. Functoriality with respect to $\Gph(L)$ remains to be shown. Let $G_1\colon V_1\times V_1\to L$ and $G_2\colon V_2\times V_2\to L$ be graphs. Furthermore, let $f\colon G_1\to G_2$ be a morphism of $L$-graphs, and let $y=(v_0,\dots,v_n)$ be a simplex in $R^t_\tensor(G_1)_n$. If $f(v_0)= f(v_i)$ for all $0\leq i\leq n$, then 
    $
    \mathcal{L}^{G_2}_\tensor(f(y))=G_2(f(v_0),f(v_0))\leq G_1(v_0,v_0)= \mathcal{L}^{G_1}_\tensor(\iota_0^*y)\leq t,
    $
    where $\iota_0\colon [0]\to[n]$ is the map sending $0$ to $0$. If $f(v_0)\neq f(v_i)$, then $v_0\neq v_i$, and by functoriality of $\tensor$ \ref{cond:l1} we have
    \begin{equation*}
        \mathcal{L}^{G_2}_\tensor(f(y))=\bigtensor_{f(v_{i-1})\neq f(v_{i})}G_2(f(v_{i-1}),f(v_{i})) \leq \bigtensor_{f(v_{i-1})\neq f(v_{i})}G_1(v_{i-1},v_{i}).
    \end{equation*}
    Furthermore, since $\tensor$ is non-decreasing we get that
    \begin{equation*}
        \bigtensor_{f(v_{i-1})\neq f(v_{i})}G_1(v_{i-1},v_{i})\leq \bigtensor_{v_{i-1}\neq v_{i}}G_1(v_{i-1},v_{i})=\mathcal{L}^{G_1}_\tensor(y)\leq t.
    \end{equation*}
    Thus, the pointwise map $(v_0,\ldots, v_n)\mapsto (f(v_0), \ldots, f(v_n))$ restricts to a map $\Tilde{f}:R^t_\tensor(G_1)_n\to R^t_\tensor(G_2)_n$. It is clear that this map is the identity if $f=\Id_{G_1}$ and that $\widetilde{f\circ g}=\Tilde{f}\circ \Tilde{g}$.
\end{proof}

Fixing $t\in L$ and an $L$-graph $G\colon V\times V\to L$, we get a simplicial set $R_\tensor^t(G):\Delta^{op}\to\Set$ where $R_\tensor^t(G)_n=R_\tensor([n],t,G)$ and $R_\tensor^t(G)(\alpha)=R_\tensor(\alpha,\Id_t,\Id_G)$ for an order-preserving map $\alpha:[m]\to[n]$. Note that $t\mapsto R_\tensor^t(G)$ is an $L$-filtration of the singular $V$-simplex $EV$.

\begin{definition}[The Monoidal Rips Filtration]\label{def:monoidal_rips_filtration}
Let $L=(L,\leq,\tensor,e)$ be a symmetric monoidal lattice and let $G\colon V\times V\to L$ be an $L$-graph. The collection $R^\bullet_\tensor(G)=\{R^t_\tensor(G)\}_{t\in L}$ defines an $L$-filtered simplicial set called \textbf{the monoidal Rips filtration} of $G$. In the case where $\tensor$ is the $p$-sum from \cref{def:p_sum}, and $\leq$ is the standard order on $[0,\infty]$, we refer to $R^\bullet_{+_p}(G)$ as the \textbf{$p$-Rips filtration}.
\end{definition}

We now give two examples showing how the monoidal Rips filtration generalizes existing constructions.

\begin{example}[The Directed Rips Filtration {\cite{turner2019rips}}]\label{ex:recovering_directed_rips}
An ordered tuple complex $K$ is a collection of tuples that is closed under the removal of vertices. In other words, $K$ is an ordered tuple complex if for all $(v_0,\ldots,v_n)\in K$, we also have $(v_0,\ldots,\hat{v}_i,\ldots,v_n)\in K$ for all $i=0,\ldots,n$. Let $G\colon V\times V\to [0, \infty]$. The \textbf{directed Rips filtration} from \cite{turner2019rips} is the ordered tuple complex defined in filtration degree $t$ as
 \begin{equation*}
     \mathcal{R}^\text{dir}(G)_t = \left\{y=(v_0,\ldots, v_n)\mid G(v_i, v_j)\leq t\text{ for all }i\leq j\right\}.
 \end{equation*}
The condition $G(v_i, v_j)\leq t$ whenever $i\leq j$ is equivalent to $\sup_{i\leq j}G(v_i, v_j)\leq t$, which again is equivalent to requiring that $\mathcal{L}^G_{+_\infty}(\alpha^*y)\leq t$ for all order-preserving maps $\alpha\colon[m]\to[n]$. Thus, when $p=\infty$, the $p$-Rips filtration of $G$ agrees with the directed Rips filtration $\mathcal{R}^\text{dir}(G)$ when forgetting about the degeneracy maps, i.e., as semi-simplicial sets. As discussed in \cite{turner2019rips}, the directed Rips filtration is closed under adjacent repeats and can thus be extended to a filtered simplicial set. So the $\infty$-Rips and the directed Rips filtration are also identical as simplicial sets. We note that the directed Rips filtration can be defined for $\mathbb{R}$-valued graphs as well.
\end{example}

\begin{example}[The $\ell_p$-Vietoris-Rips simplicial set {\cite{ivanov2024ell_p}}]\label{ex:recovering_ell_p_vietoris_rips} Another construcion based on the work of \cite{cho} is the $\ell_p$-Vietoris-Rips simplicial set for metric spaces, which was recently introduced in \cite{ivanov2024ell_p}. Let $(V,d)$ be a metric space, and let $1\leq p\leq\infty$. The \textbf{$\ell_p$-Vietoris-Rips simplicial set} of $V$ in filtration degree $t>0$, denoted $\mathcal{VR}^p_{t}(V)$, has $n$-simplices $y=(v_0,\ldots,v_n)$ such that
\begin{equation*}
    w_p(y):=\max_{0\leq i_0<\dots<i_m\leq n}\Vert d(v_{i_0},v_{i_{1}}),\ldots,d(v_{i_{m-1}},v_{i_{m}})\Vert_p\leq t
\end{equation*}
where the maximum is taken over all subsequences $i_0<\dots<i_m$ of $0,\ldots,n$. When $p<\infty$, we can rewrite the above expression as
\begin{equation*}
w_p(y) = \max_{0\leq i_0<\dots<i_m\leq n}\left(\sum_{k=0}^{m-1} d(v_{i_k},v_{i_{k+1}})^p\right)^{1/p} = \max_{\alpha\in\Delta([m],[n])}\mathcal{L}^d_{+_p}(\alpha^*y).
\end{equation*}
For $p=\infty$, a similar argument holds. Hence, we get the $p$-Rips filtration of the metric $d\colon V\times V\to [0,\infty]$. In other words, $\mathcal{VR}^p_{t}(V)= R_{+_p}^t(d)$.
\end{example}

In \cite[Proposition~2.6]{ivanov2024ell_p}, the authors show that the $\ell_p$-Vietoris-Rips simplicial set agrees with Cho's nerve construction $N_\tensor$ from \cite{cho}. Despite being inspired by Cho's construction, the monoidal Rips filtration is, in general, different as demonstrated by the following example:
\begin{example}\label{ex:not_cho}
    Let $(L,\leq,\tensor,e)$ be a symmetric monoidal lattice, let $G\colon V\times V\to L$ be an $L$-graph, and $t\in L$. Consider the simplicial subset $N^t_\tensor(G)$ of $EV$ from \cite{cho} with $n$-simplices 

    \begin{equation*}
N^t_\tensor(G)_n=\left\{(v_0,\dots,v_n)\,\middle\vert\,\begin{array}{l}
\text{there exist }r_1,\ldots,r_n\text{ in }L\text{ with }\bigtensor_i r_i\leq t\\
\textrm{and } G(v_i,v_j) \leq \bigtensor_{i+1\leq k\leq j} r_k  \textrm{ for all } i\leq j
\end{array}\right\}.
    \end{equation*}

    It is straightforward to show that $N_\tensor^t(G)$ is a simplicial subset of $R_\tensor^t(G)$ for all $t\in L$. However, the converse is not true:
    Consider the $2$-sum $+_2$ and the $[0,\infty]$-graph $G$ defined as
    \begin{equation*}
        \begin{tikzcd}
            v_0\arrow[r,"1"]\arrow[rr,bend left = 30,"3"]\arrow[rrr,bend right = 40, "4"] & v_1\arrow[r,"3"]\arrow[rr,bend right = 20,swap, "3"] & v_2\arrow[r,"1"] & v_3
        \end{tikzcd}
    \end{equation*}
    with $0$-diagonal and where missing edges have value $\infty$. Now, $y=(v_0,v_1,v_2,v_3)$ is a simplex in $R_{+_2}^4(G)$. Assume that $y$ is in $N_{+_2}^4(G)$. This means that there exists $r_1$, $r_2$ and $r_3$, with $i)$ $\sqrt{r_1^2+r_2^2+r_3^2}\leq 4$ and 
    \begin{align*}
        & a)\quad 4=G(v_0,v_3)\leq \sqrt{r_1^2+r_2^2+r_3^2}, & d) \quad 1=G(v_0,v_1)\leq r_1,\\
        & b)\quad 3=G(v_0,v_2)\leq \sqrt{r_1^2+r_2^2}, & e)\quad 3=G(v_1,v_2)\leq r_2, \\
        & c)\quad 3=G(v_1,v_3)\leq \sqrt{r_2^2+r_3^2}, & f)\quad 1=G(v_2,v_3)\leq r_3.
    \end{align*}
    In particular, from $i)$ and $a)$ we have $r_1^2+r_2^2+r_3^2=16$. From $b)$ we have $r_1^2+r_2^2\leq 9$, and so $r_3^2\geq 7$. Similarly, from $c)$ we get $r_1^2\geq 7$. Thus, $16 = r_1^2+r_2^2+r_3^2\geq r_2^2+14$ and $r^2_2\leq 2$, which contradicts $e)$. Hence, $y$ cannot be a simplex in $N_{+_2}^4(G)$.
\end{example}

\section{The Graph Distance}\label{sec:directed_graph_distance}
In this section, we introduce the (directed) graph distance between $L$-graphs on a fixed vertex set. This enables us to define the distortion of a correspondence with respect to two $L$-graphs. Later, in \cref{sec:multipersistence}, where we discuss multipersistence for graphs, we will introduce the generalized network distance by minimizing the graph distance over all correspondences.

Let $(L,\leq,\atensor,e)$ be some fixed symmetric monoidal lattice. For $t\in L$, let $\mu_t\colon L\to L$ be the order-preserving map $c\mapsto t\atensor c$, and let $\lambda_t\colon L\to L$ be its left adjoint. Using \cref{eq:left_adjoint_as_inf}, we get the following expression for $\lambda_t$:
\begin{equation}\label{eq:left_adjoint_lambda_t}
    \lambda_t(s)=\inf\{c\in L\mid s\leq t\atensor c\}.
\end{equation}

\begin{example}\label{ex:left_adjoint_q_sum}
If $([0,\infty],\leq,+_q,0)$ with $q\geq1$, the left adjoint $\lambda_t\colon [0,\infty]\to[0,\infty]$ defined in \cref{eq:left_adjoint_lambda_t} above is given by $\lambda_t(s)=\max\{0,s^q-t^q\}^{1/q}$.
\end{example}

\begin{definition}[Graph Distance]
Given a symmetric monoidal lattice $(L,\leq,\atensor,e)$ and two $L$-graphs $G,G'\colon V\times V\to L$, we define the \textbf{directed graph distance} from $G$ to $G'$ as
\begin{equation*}
    \vec{\delta}_\atensor(G,G')=\sup_{v,v'\in V}\inf\{c\in L\mid G'(v,v')\leq G(v,v')\atensor c\}=\sup_{v,v'\in V}\lambda_{G(v,v')}\left(G'(v,v')\right).
\end{equation*}
The (undirected) \textbf{graph distance} $\delta_\atensor(G,G')$ between $G$ and $G'$ is defined by symmetrizing as follows:
\begin{equation*}
\delta_\atensor(G,G')=\sup\left\{\vec{\delta}_\atensor(G,G'),\,\vec{\delta}_\atensor(G',G)\right\}.
\end{equation*}
\end{definition}
Note that the set $\{c\in L\mid G'(v,v')\leq G(v,v')\atensor c\}$ is nonempty since it at least contains $G'(v,v')$. 
The directed graph distance $\vec{\delta}_\atensor(G,G')$ is defined as a supremum, so we have $\lambda_{G(v,v')}\left(G'(v,v')\right)\leq\vec{\delta}_\atensor(G,G')$ for all $v,v'\in V$. Since $\mu_t$ is the right adjoint of $\lambda_t$, we then get
\begin{equation*}
G'(v,v')\leq\mu_{G(v,v')}\left(\vec{\delta}_\atensor(G,G')\right)=G(v,v')\atensor\vec{\delta}_\atensor(G,G')
\end{equation*}
for all $v,v'\in V$. We summarize this property of the (directed) graph distance in the following lemma:
\begin{lemma}\label{lem:graph_distance_is_a_bound}
    If $(L,\leq,\atensor,e)$ is a symmetric monoidal lattice and $G,G'\colon V\times V\to L$ are $L$-graphs, then the inequality $G'(v,v')\leq G(v,v')\atensor\vec{\delta}_{\atensor}(G,G')$ holds for all $v,v'\in V$.
\end{lemma}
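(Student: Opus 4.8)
The plan is to unwind the definition of $\vec\delta_\atensor(G,G')$ as a supremum and then transport the resulting inequality through the Galois connection $\lambda_t\dashv\mu_t$ recalled just above the statement. Fix vertices $v,v'\in V$ and abbreviate $t=G(v,v')$ and $s=G'(v,v')$. By definition $\vec\delta_\atensor(G,G')=\sup_{u,u'\in V}\lambda_{G(u,u')}(G'(u,u'))$, and since a supremum dominates each of its members, we immediately obtain $\lambda_t(s)\le\vec\delta_\atensor(G,G')$. (The supremum exists because $(L,\le)$ is a complete lattice, and each $\lambda_{G(u,u')}$ is well defined via \cref{eq:left_adjoint_as_inf} since the set $\{c\in L\mid G'(u,u')\le G(u,u')\atensor c\}$ is nonempty: it contains $G'(u,u')$, as $\atensor$ is non-decreasing by \cref{lem:monoidal_product_is_non_decreasing}. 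This is the remark immediately preceding the lemma.)

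Now invoke the defining adjunction property $\lambda_t(x)\le c\iff x\le\mu_t(c)$ with $x=s$ and $c=\vec\delta_\atensor(G,G')$: from $\lambda_t(s)\le\vec\delta_\atensor(G,G')$ we deduce $s\le\mu_t(\vec\delta_\atensor(G,G'))$. Since $\mu_t(c)=t\atensor c$ by definition, this is precisely $G'(v,v')\le G(v,v')\atensor\vec\delta_\atensor(G,G')$. As $v,v'\in V$ were arbitrary, the lemma follows.

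There is no real obstacle here; the entire content sits in the setup of $\mu_t$ and its left adjoint $\lambda_t$ in \cref{sec:galois_connections_and_adjunctions}, and the two things needing a moment's care are the nonemptiness point above and the bare fact that a supremum bounds each element of the indexed family. An equivalent phrasing avoids naming the adjunction unit explicitly: $\mu_t$ is order-preserving, so $\mu_t(\lambda_t(s))\le\mu_t(\vec\delta_\atensor(G,G'))$, while $s\le\mu_t(\lambda_t(s))$ holds because $\lambda_t$ is the left adjoint of $\mu_t$; chaining these inequalities gives the same conclusion.
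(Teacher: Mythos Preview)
Your proof is correct and follows exactly the same approach as the paper: the paper's argument (given in the paragraph immediately preceding the lemma) also uses that the supremum defining $\vec\delta_\atensor(G,G')$ dominates each $\lambda_{G(v,v')}(G'(v,v'))$ and then applies the adjunction $\lambda_t\dashv\mu_t$ to conclude. Your additional remarks on nonemptiness and the alternative phrasing via the unit are fine elaborations but not needed beyond what the paper already notes.
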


The graph distance is not a metric, as it takes values in a general set $L$. However, in the following lemma, we show that it has metric-like properties:
\begin{lemma}\label{lem:graph_distance_metric}
Fix a finite set $V$ and a symmetric monoidal lattice $(L,\leq,\atensor,e)$, and let $\mathcal{G}(V,L)$ denote the collection of all $L$-graphs with vertex set $V$. If $G,G',G''\in\mathcal{G}(V,L)$, then:
\begin{enumerate}
    \item $\delta_\atensor(G,G')=e$ if and only if $G=G'$,
    \item $\delta_\atensor(G,G')=\delta_\atensor(G',G)$, and
    \item $\delta_\atensor(G,G')\leq \delta_\atensor(G,G'')\atensor\delta_\atensor(G'',G')$.
\end{enumerate}
If $L=([0,\infty],\leq,+_p,0)$, then $(\mathcal{G}(V,L), \delta_{+_p})$ is an extended metric space.
\end{lemma}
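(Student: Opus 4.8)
The plan is to handle the three listed properties in order and then read off the final statement from them, using \cref{lem:graph_distance_is_a_bound} as the main workhorse.

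Property (2) is immediate from the definition $\delta_\atensor(G,G')=\sup\{\vec\delta_\atensor(G,G'),\vec\delta_\atensor(G',G)\}$, which is symmetric in its two arguments. For property (1), the forward implication uses \ref{cond:m2}: if $G=G'$ then $G'(v,v')=G(v,v')\atensor e$, so $e$ lies in the set $\{c\in L\mid G'(v,v')\leq G(v,v')\atensor c\}$, whence $\lambda_{G(v,v')}(G'(v,v'))\leq e$; by \ref{cond:l2} this is an equality, and taking the supremum over $v,v'$ (and symmetrizing) gives $\delta_\atensor(G,G')=e$. For the converse, from $\delta_\atensor(G,G')=e$ and $e=\inf L$ I get $\vec\delta_\atensor(G,G')=\vec\delta_\atensor(G',G)=e$; then \cref{lem:graph_distance_is_a_bound} yields $G'(v,v')\leq G(v,v')\atensor e=G(v,v')$ for all $v,v'$ and, symmetrically, $G(v,v')\leq G'(v,v')$, so antisymmetry of $\leq$ forces $G=G'$.

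Property (3) is the only part needing a computation. I would first prove the directed triangle inequality $\vec\delta_\atensor(G,G')\leq\vec\delta_\atensor(G,G'')\atensor\vec\delta_\atensor(G'',G')$. Writing $\beta=\vec\delta_\atensor(G,G'')$ and $\gamma=\vec\delta_\atensor(G'',G')$, two applications of \cref{lem:graph_distance_is_a_bound} together with the functoriality \ref{cond:l1} give $G'(v,v')\leq G''(v,v')\atensor\gamma\leq (G(v,v')\atensor\beta)\atensor\gamma$ for every $v,v'$, and by \ref{cond:m1} and commutativity the right-hand side equals $G(v,v')\atensor(\beta\atensor\gamma)$. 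Hence $\beta\atensor\gamma$ is an admissible $c$ in the infimum defining $\lambda_{G(v,v')}(G'(v,v'))$, so $\lambda_{G(v,v')}(G'(v,v'))\leq\beta\atensor\gamma$ for all $v,v'$, and taking the supremum over $v,v'$ gives the directed inequality. Since $\vec\delta_\atensor(G,G'')\leq\delta_\atensor(G,G'')$ and $\vec\delta_\atensor(G'',G')\leq\delta_\atensor(G'',G')$, monotonicity \ref{cond:l1} upgrades this to $\vec\delta_\atensor(G,G')\leq\delta_\atensor(G,G'')\atensor\delta_\atensor(G'',G')$; applying the same argument to $\vec\delta_\atensor(G',G)$ through $G''$ and invoking property (2) and commutativity of $\atensor$ gives the identical upper bound for $\vec\delta_\atensor(G',G)$; taking the supremum of the two establishes (3).

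Finally, for $L=([0,\infty],\leq,+_p,0)$ we have $e=0$, so (1)--(3) say exactly that $\delta_{+_p}(G,G')=0\iff G=G'$, that $\delta_{+_p}$ is symmetric, and that $\delta_{+_p}(G,G')\leq\delta_{+_p}(G,G'')+_p\delta_{+_p}(G'',G')$. Using the elementary comparison $a+_p b=(a^p+b^p)^{1/p}\leq a+b$ for all $a,b\in[0,\infty]$ and $p\in[1,\infty]$ (this is $\Vert\cdot\Vert_p\leq\Vert\cdot\Vert_1$ on $\mathbb{R}^2$), the $+_p$-triangle inequality implies the ordinary one; all values lie in $[0,\infty]$, and the value $\infty$ genuinely occurs (e.g. when $G(v,v')=0<\infty=G'(v,v')$, so that $\lambda_0(\infty)=\infty$), so $(\mathcal{G}(V,L),\delta_{+_p})$ is an extended metric space. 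I do not expect a real obstacle here; the only point requiring care is the bookkeeping in (3) --- that exhibiting one admissible $c$ bounds the defining infimum --- together with the re-association step, both routine given \cref{lem:graph_distance_is_a_bound} and the monoidal axioms.
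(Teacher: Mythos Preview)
Your proposal is correct and follows essentially the same approach as the paper: both use \cref{lem:graph_distance_is_a_bound} as the main tool, establish the directed triangle inequality $\vec\delta_\atensor(G,G')\leq\vec\delta_\atensor(G,G'')\atensor\vec\delta_\atensor(G'',G')$ identically, and finish the $[0,\infty]$ case via $a+_p b\leq a+b$. The only cosmetic difference is in symmetrizing (3): the paper bounds $\sup\{\vec\delta_\atensor(G,G'')\atensor\vec\delta_\atensor(G'',G'),\,\vec\delta_\atensor(G',G'')\atensor\vec\delta_\atensor(G'',G)\}$ using the inequality $\sup\{s\atensor t,\,s'\atensor t'\}\leq\sup\{s,s'\}\atensor\sup\{t,t'\}$ from \cref{lemma:supbistructure}, whereas you first replace each directed distance by the corresponding undirected one and then take the supremum---a slightly more direct variant that avoids invoking that inequality.
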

\begin{proof}
See \cref{proof:graph_distance_metric}.
\end{proof}

To define distances between graphs with different vertex sets, we use correspondences.

\begin{definition}
A \textbf{correspondence} $C$ between sets $V_1$ and $V_2$ is a subset of $V_1\times V_2$ such that the projection maps $\pi_i\colon C\to V_i$ are surjective for $i=1,2$. We denote by $\mathcal{C}(V_1,V_2)$ the collection of all correspondences between $V_1$ and $V_2$.
\end{definition}

We now state the definition of the distortion of a correspondence, and define the well-known network distance for real-valued networks. Consider functions $G_1\colon V_1\times V_1\to [0,\infty]$ and $G_2\colon V_2\times V_2\to [0,\infty]$. Suppose $C$ is a correspondence between the vertex sets with projection maps $\pi_i\colon C\to V_i$ for $i=1,2$. The \textbf{distortion} of $C$ with respect to $G_1$ and $G_2$ is then defined as (see, e.g., \cite{chowdhury2018functorial})
\begin{equation*}
    \operatorname{dis}(C,G_1,G_2) = \sup_{(v_1,v_2),(v_1',v_2')\in C}\vert G_1(v_1,v_1') - G_2(v_2, v_2')\vert.
\end{equation*}

The \textbf{network distance} (or \textbf{correspondence distortion distance}) $\operatorname{d}_{\mathcal{N}}$ between $G_1$ and $G_2$ is then defined in terms of the smallest distortion taken over all correspondences
\begin{equation*}
    \operatorname{d}_{\mathcal{N}}(G_1, G_2) = \frac{1}{2}\inf_{C\in\mathcal{C}(V_1, V_2)}\operatorname{dis}(C,G_1,G_2).
\end{equation*}

The network distance is a pseudometric for real-valued networks, and it is commonly used to state stability results for topological methods on networks \cite{chowdhury2018functorial, chowdhury2018persistent, mendez2023directed, turner2019rips, carlsson2014hierarchical}. For (compact) metric spaces $d_1:V_1\times V_1\to [0,\infty]$ and $d_2:V_2\times V_2\to [0,\infty]$, the network distance $\operatorname{d}_\mathcal{N}(d_1,d_2)$ agrees with the Gromov-Hausdorff distance $\operatorname{d}_{\textrm{GH}}(V_1,V_2)$ \cite[Theorem~7.3.25]{burago2022course}. 

Note that the set $\mathcal{C}(V_1,V_2)$ is finite as our graphs are assumed to have finite vertex sets. Since we are taking the infimum of a finite subset of $[0,\infty]$, there exists some correspondence such that this infimum is achieved. In the next example, we show that the graph distance agrees with the usual distortion for $[0,\infty]$-graphs.

\begin{example}[Distortion as graph distance]\label{ex:correspondence_distortion}
Consider the symmetric monoidal lattice $([0,\infty], \leq, +, 0)$ where $+$ is addition of real numbers and $\leq$ is the usual ordering. Let $G_1\colon V_1\times V_1\to [0,\infty]$ and $G_2\colon V_2\times V_2\to [0,\infty]$ be graphs, and let $C\subseteq V_1\times V_2$ be a correspondence with projection maps $\pi_i\colon C\to V_i$. The directed graph distance from $\pi_1^*G_1$ to $\pi_2^*G_2$ is now
\begin{equation*}
\vec{\delta}_+(\pi_1^*G_1,\pi_2^*G_2)=\sup_{c,c'\in C}\max\left\{0,G_2(\pi_2(c),\pi_2(c'))-G_1(\pi_1(c),\pi_1(c'))\right\},
\end{equation*}
and similarly 
\begin{equation*}
\vec{\delta}_+(\pi_2^*G_2,\pi_1^*G_1)=\sup_{c,c'\in C}\max\left\{0,G_1(\pi_1(c),\pi_1(c'))-G_2(\pi_2(c),\pi_2(c'))\right\},
\end{equation*}
Symmetrizing gets us the following expression for the graph distance:
\begin{equation*}
\delta_+(\pi_1^*G_1,\pi_2^*G_2) = \sup_{(v_1,v_2),(v_1',v_2')\in C}\vert G_1(v_1,v_1')-G_2(v_2,v_2')\vert=\operatorname{dis}(C,G_1,G_2).
\end{equation*}
\end{example}

\section{Stability}\label{sec:stability}
In this section, we study stability of the Monoidal Rips filtration. We show that surjections on vertex sets induce homotopy equivalences between filtered simplicial sets, a key technical lemma. We then prove stability results for the (directed) graph distance on graphs with the same vertex set, and extend stability to graphs with different vertex sets through correspondences. These results recover known stability results for the directed Rips complex and the $\ell_p$-Vietoris–Rips construction. The results in this section also form the basis for \cref{sec:multipersistence}, where we discuss stability for multi-parameter persistence.

\subsection{Surjections Induce Homotopy Equivalences}\label{sec:surjections_induce_homotopy_equivalences}

We show that graphs induced by surjections give homotopy equivalent monoidal Rips complexes under certain conditions. That is, we prove the following lemma:

\begin{lemma}[Surjections Induce Homotopy Equivalence]\label{lem:surjection_induce_isomorphism_on_homology}
Let $L=(L,\leq,\tensor,e)$ be a symmetric monoidal lattice and let $G\colon V\times V\to L$ be an $L$-graph. If $f\colon X\twoheadrightarrow V$ is surjective with $X$ finite, and either $G$ has $e$-diagonal, or $\tensor=\sup$, then the induced morphism $f:f^*G\to G$ induces homotopy equivalences $R^t_\tensor(f):R^t_\tensor(f^*G)\xrightarrow{\simeq}R^t_\tensor(G)$ for each $t\in L$, sending $n$-simplices $(x_0,\dots x_n)$ to $(f(x_0),\dots, f(x_n))$. This is natural in the sense that if $t\leq t'$ in $L$ then we have a commuting diagram
\begin{center}
    \begin{tikzcd}[row sep=2em, column sep=3em]
    R^t_\tensor(f^*G)\arrow[r,"R^t_\tensor(f)"]\arrow[d,hook] & R^t_\tensor(G)\arrow[d,hook]\\
    R^{t'}_\tensor(f^*G)\arrow[r,"R^{t'}_\tensor(f)"] & R^{t'}_\tensor(G).
    \end{tikzcd}
\end{center}
\end{lemma}
\begin{proof} Naturality follows directly from the fact that $f:f^*G\to G$ is a morphism of $L$-graphs (\cref{eq: f_induced_on_f*G}) and $R^\bullet_\tensor$ is a functor (\cref{prop:R_tensor_is_a_functor}). Furthermore, if $f$ is a bijection, then the statement it is clearly true. So it is enough to show that $R^t_\tensor(f)$ is a homotopy equivalence if $X=V\amalg\{\Tilde{v}\}$ where $f|_V=\Id_V$ and $f(\Tilde{v})=v$ for some $v\in V$. By \cite[Thm I.11.2]{goerss2009simplicial}, we know that the map $f_*\colon R_\tensor^t(f^*G)\to R_\tensor^t(G)$ is a trivial fibration (in particular a homotopy equivalence) if it has the right lifting property with respect to all boundary inclusions. That is, the diagonal map exists for every commutative diagram
    \begin{equation*}
        \begin{tikzcd}
            \partial\Delta^n\arrow[r,"a"]\arrow[d,hook] & R_\tensor^t(f^*G)\arrow[d,"f_*"]\\
            \Delta^n\arrow[r,"b"]\arrow[ru,dashed,"\exists"] & R_\tensor^t(G).
        \end{tikzcd}
    \end{equation*}
    So assume we have such simplicial maps $a$ and $b$. These maps are completely determined by where they map the non-degenerate simplices \cite[Lemma~on~p.177]{mac2013categories}. In $\Delta^n$ the identity map $\Id_n\colon[n]\to[n]$ is the only non-degenerate simplex. In $\partial\Delta^n$ we have the $n+1$ injective coface maps $d^i\colon [n-1]\to [n]$ not hitting $i\in [n]$ as non-degenerate elements. 
    
    Now, if $n=1$, the map $a$ is defined by the vertices $a_1=a(d^0)$ and $a_0=a(d^1)$. By commutativity, we get that $b(\Id_n)=(f(a_0),f(a_1))$ is a simplex in $R^t_\tensor(G)$. In particular, $(a_0,a_1)$ is a simplex in $R^t_\tensor(f^*G)$ since $f^*G(a_0,a_1)=G(f(a_0),f(a_1))=G(b(\Id_1))\leq t$, and we get a lift by sending $\Id_1$ to $(a_0,a_1)$.

    For $n > 1$, let $a(d^i)=(a^i_{0},\dots,\widehat{a^i_{i}},\dots,a^i_{n})$. Since $a$ is a simplicial map we have $d_j(a(d^i))=d_{i}(a(d^{j+1}))$ for $i\leq j$, so 
    \begin{equation*}
        (a^i_{0},\dots,\widehat{a^i_{i}},\dots,\widehat{a^i_{j+1}},\dots,a^i_{n}) = (a^j_{0},\dots,\widehat{a^j_{i}},\dots,\widehat{a^j_{j+1}},\dots,a^j_{n}).
    \end{equation*}
    In particular, $a^i_{l}=a^j_{l}=:a_l$ for all $l\neq i,j+1$. By considering all pairs $i\leq j$ we can write $a(d^i)=(a_0,\dots,\widehat{a_i},\dots,a_n)$ for all $i$. Now, $b(\Id_n)=(f(a_0),\dots, f(a_n))$ is a simplex in $R^t_\tensor(G)$, and we are left to show that $(a_0,\dots,a_n)$ is a simplex in $R^t_\tensor(f^*G)$. By construction, all faces of $(a_0,\dots,a_n)$ are in $R^t_\tensor(f^*G)$. We finish the proof by showing that $\mathcal{L}_\tensor^{f^*G}(a_0,\ldots, a_n)\leq t$. Recall, 
    \begin{equation*}
    \mathcal{L}_\tensor^{f^*G}(a_0,\ldots, a_n) =
    \begin{cases}
        G(f(a_0),f(a_0)) &\text{if } a_0=a_1=\dots=a_n\text{ and}\\
        \bigtensor_{a_{i-1}\neq a_{i}}G(f(a_{i-1}),f(a_{i})) &  \text{otherwise.}
    \end{cases}
    \end{equation*}

    We know that the following value is less than or equal to $t$: 
    \begin{equation*}
    \mathcal{L}_\tensor^{G}(b(\Id_n)) =
    \begin{cases}
        G(f(a_0),f(a_0)) &\text{if } f(a_0)=f(a_1)=\dots=f(a_n)\text{ and}\\
        
        \bigtensor_{f(a_{i-1})\neq f(a_{i})}G(f(a_{i-1}),f(a_{i})) &  \text{otherwise.}
    \end{cases}
    \end{equation*}   
    If $a_0=\dots=a_n$, then $f(a_0)=\dots= f(a_n)$ and the two values are both $G(f(a_0),f(a_0))\leq t$. If $f(a_0)=\dots= f(a_n)$ and $a_{i-1}\neq a_i$ for some $i$, then $a_i\in\{v,\Tilde{v}\}$ for all $i$. In particular, $\mathcal{L}_\tensor^{f^*G}(a_0,\ldots, a_n)$ is a product of $G(f(v),f(\Tilde{v}))=G(f(\Tilde{v}),f(v))=G(f(a_0),f(a_0))$. If $G$ has $e$-diagonal, then they are all $e\leq t$, and so is the product. 
    
    Finally, consider $f(a_{i-1})\neq f(a_i)$ (thus $a_{i-1}\neq a_i$) for some $i$. For $i\in[n]$, if $a_{i-1}\neq a_i$ and $f(a_{i-1})\neq f(a_i)$, then both products get the same contribution $G(f(a_{i-1}), f(a_i))$. Furthermore, if $a_{i-1}\neq a_i$ while $f(a_{i-1})= f(a_i)$, then only the value in $\mathcal{L}_\tensor^{f^*G}(a_0,\ldots, a_n)$ gets a contribution. However, the contribution to the product in this case is $G(f(a_{i-1}),f(a_i))$, which is $e$ when $G$ has $e$-diagonal. Thus,  $\mathcal{L}_\tensor^{f^*G}(a_0,\ldots, a_n) = \mathcal{L}^G_\tensor(b(\Id_n))\leq t$. 
    
    If we have $\tensor=\sup$ without $e$-diagonal, we note that each edge $(a_{i-1},a_{i})$ is a face of $a(d^j)\in R^t_p(f^*G)$ for $j\not\in \{i-1,i\}$. So, $(a_{i-1},a_{i})$ is a simplex in $R^t_p(f^*G)$ and $G(f(a_{i-1}),f(a_{i}))\leq t$. Thus, $t$ gives an upper bound for all elements we take the supremum of in $\mathcal{L}_\tensor^{f^*G}(a_0,\ldots, a_n)$. Since the supremum is the least upper bound we get $\mathcal{L}_\tensor^{f^*G}(a_0,\ldots, a_n)\leq t$. 
\end{proof}
The following example demonstrates that requiring $e$-diagonal is in fact necessary:
\begin{example}
Let $V=\{a\}$, $X=\{a,b\}$ and $f\colon X\to V$ be the unique surjection $f(a)=f(b)=a$. Furthermore, let $\tensor$ be addition of real numbers and consider the graph $G\colon V\times V\to [0,\infty]$ with $G(a,a)=1$. Then $R^\bullet_+(f^*G)$ and $R^\bullet_+(G)$ are not homotopy equivalent. In fact, they do not even have isomorphic persistence modules. It is clear that $H_k(R^t_+(G))$ is trivial for all $t\geq0$ and $k>0$. On the other hand, we see that $H_1(R^t_+(f^*G))$ is non-trivial whenever $t\in[1,2)$ since $w=(a,b)+(b,a)\in C_1(R^1_+(f^*G))$ is a non-trivial $1$-cycle. The only generators in $C_2(R^t_+(f^*G))$ that do not have boundary $(a,a)$ or $(b,b)$ are the simplices $(a,b,a)$ and $(b,a,b)$, both having filtration value $1+1=2$.
\end{example}

\subsection{Main Stability Results}\label{sec:main_stability_results}

In this section, we consider a symmetric duoidal lattice $(L,\leq,\tensor,\atensor,e)$. The monoidal Rips filtration is defined in terms of the product $\tensor$, whereas interleavings and the (directed) graph distance are defined in terms of $\atensor$. Recall that the two products are required to be compatible in the sense of the generalized Minkowski inequality given in \ref{cond:bl2}. We start with the following lemma where we are in the setting of fixed vertex sets:

\begin{lemma}\label{stabilitylemma}
Let $(L,\leq,\tensor,\atensor,e)$ be a symmetric duoidal lattice, and let $G,G'\colon V\times V\to L$ be $L$-graphs. Then, for every $t\in L$, we get an inclusion of $n$-skeletons
\begin{equation*}
\operatorname{sk}_n\left(R_\tensor^t(G)\right)\hookrightarrow\operatorname{sk}_n\left(R_\tensor^{t'}(G')\right)
\end{equation*}
where $t'=t\atensor\vec{\delta}_\atensor(G,G')^{\tensor n}$.
\end{lemma}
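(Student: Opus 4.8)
The plan is to show that every $k$-simplex in $\operatorname{sk}_n(R_\tensor^t(G))$ is also a $k$-simplex in $\operatorname{sk}_n(R_\tensor^{t'}(G'))$, where $t' = t \atensor \vec\delta_\atensor(G,G')^{\tensor n}$; since the simplicial structure maps on both sides are restrictions of those on $EV$, the inclusion of simplex sets in each degree automatically assembles into a simplicial subset inclusion. Because a simplex in the $n$-skeleton of either filtered complex is either a genuine simplex of dimension $\le n$ or a degeneracy of such a simplex, and degeneracies of a simplex in $R_\tensor^s(\cdot)$ lie in $R_\tensor^s(\cdot)$ while the skeleton is closed under them, it suffices to treat a simplex $y = (v_0,\dots,v_k)$ with $k \le n$ that lies in $R_\tensor^t(G)$, and to show $y \in R_\tensor^{t'}(G')$.

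The core estimate is to bound $\mathcal{L}_\tensor^{G'}(\alpha^* y)$ by $t'$ for every order-preserving $\alpha\colon [m] \to [k]$. Since $\alpha^* y$ is itself a tuple with at most $n+1$ entries (as $k \le n$), and we already know $\mathcal{L}_\tensor^G(\beta^* y) \le t$ for all such $\beta$ (because $y \in R_\tensor^t(G)$), it is enough to prove the single-simplex statement: if $z = (w_0,\dots,w_m)$ is any tuple with $\mathcal{L}_\tensor^G(\gamma^* z)\le t$ for all $\gamma\in\Delta([l],[m])$, then $\mathcal{L}_\tensor^{G'}(z) \le t\atensor\vec\delta_\atensor(G,G')^{\tensor m}$, where $m \le n$. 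For this, I would expand $\mathcal{L}_\tensor^{G'}(z) = \bigtensor_{w_{i-1}\neq w_i} G'(w_{i-1},w_i)$ (the all-equal case is handled separately and gives $G'(w_0,w_0) \le G(w_0,w_0)\atensor \vec\delta_\atensor(G,G') \le t\atensor\vec\delta_\atensor(G,G')^{\tensor m}$ using \Cref{lem:graph_distance_is_a_bound}, non-decreasingness, and $m\ge 1$ when there is at least one edge — handling $m=0$ trivially). For each edge, apply \Cref{lem:graph_distance_is_a_bound} to get $G'(w_{i-1},w_i) \le G(w_{i-1},w_i) \atensor \vec\delta_\atensor(G,G')$, then use functoriality \ref{cond:l1} of $\tensor$ to conclude
\begin{equation*}
\mathcal{L}_\tensor^{G'}(z) \le \bigtensor_{w_{i-1}\neq w_i}\bigl(G(w_{i-1},w_i)\atensor\vec\delta_\atensor(G,G')\bigr).
\end{equation*}
Now invoke the generalized Minkowski inequality \ref{cond:bl2} (in its iterated form stated after the definition of symmetric duoidal lattice) to push the $\atensor$ outward:
\begin{equation*}
\bigtensor_{w_{i-1}\neq w_i}\bigl(G(w_{i-1},w_i)\atensor\vec\delta_\atensor(G,G')\bigr) \le \Bigl(\bigtensor_{w_{i-1}\neq w_i} G(w_{i-1},w_i)\Bigr)\atensor\Bigl(\bigtensor_{w_{i-1}\neq w_i}\vec\delta_\atensor(G,G')\Bigr).
\end{equation*}
The first factor is exactly $\mathcal{L}_\tensor^G(z) \le t$ (taking $\gamma = \Id$). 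The second factor is $\vec\delta_\atensor(G,G')^{\tensor r}$ where $r$ is the number of "genuine" edges, and since $r \le m \le n$ and $\tensor$ is non-decreasing, $\vec\delta_\atensor(G,G')^{\tensor r} \le \vec\delta_\atensor(G,G')^{\tensor m}$. Finally \ref{cond:l1} for $\atensor$ gives $\mathcal{L}_\tensor^{G'}(z) \le t \atensor \vec\delta_\atensor(G,G')^{\tensor m} \le t'$, as $m\le n$ and another application of non-decreasingness (or the bookkeeping showing $\vec\delta^{\tensor m}\le\vec\delta^{\tensor n}$) promotes the exponent.

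The main obstacle — and the reason the $n$-skeleton appears and the exponent is $n$ rather than $1$ — is the bookkeeping around which "edge" contributions actually appear in $\mathcal{L}_\tensor^{G'}$ versus $\mathcal{L}_\tensor^G$: passing from $G$ to $G'$ can create new adjacent inequalities $w_{i-1}\neq w_i$ where before $G$ had nothing to say only via a diagonal term, but since we are not assuming $e$-diagonal here this subtlety is actually mild — both $\mathcal{L}$'s range over the same index set of adjacent pairs of $z$, so the products are indexed identically and the Minkowski step applies cleanly. The one genuinely careful point is ensuring the exponent count $r$ of nontrivial edges in any $\alpha^* y$ is at most $n$; this holds because $\alpha^* y$ has at most $m+1 \le k+1 \le n+1$ entries, hence at most $n$ adjacent pairs, and the number of nontrivial ones is no larger. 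Uniformity of $\vec\delta_\atensor(G,G')$ over all vertex pairs (it is a supremum) is what lets a single constant absorb every edge, so no per-edge tracking is needed beyond the count.
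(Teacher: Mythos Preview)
Your approach mirrors the paper's exactly: apply \cref{lem:graph_distance_is_a_bound} to each edge, use \ref{cond:l1} for $\tensor$, invoke the iterated Minkowski inequality \ref{cond:bl2}, and bound the number of nontrivial edges by $n$. There is, however, one genuine slip in the edge count. You assert (twice) that ``$\alpha^* y$ has at most $m+1 \le k+1 \le n+1$ entries,'' but for $\alpha\colon[m]\to[k]$ the tuple $\alpha^* y$ has exactly $m+1$ entries and $m$ is not bounded by $k$ --- $\alpha$ need not be injective. The conclusion $r\le n$ is still correct, but for a different reason: since $\alpha$ is order-preserving into $[k]$, it has at most $k$ strict increases, so there are at most $k\le n$ indices $i$ with $v_{\alpha(i-1)}\neq v_{\alpha(i)}$. (Equivalently, you may observe that $\mathcal{L}_\tensor^{G'}$ is unchanged by degeneracies and restrict to injective $\alpha$, after which $m\le k$ genuinely holds.) This is precisely the count the paper uses, stated tersely as ``$v_{\alpha(i-1)}\neq v_{\alpha(i)}$ can happen at most $n$ times.'' With this correction your argument is complete and coincides with the paper's.
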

\begin{proof}
Let $y=(v_0,\ldots,v_k)$ be a simplex in the $n$-skeleton of $R_\tensor^t(G)$, and let $\alpha\in\Delta([m],[k])$. In particular, we have $\mathcal{L}_\tensor^{G}(\alpha^*y)\leq t$. We now show that $\mathcal{L}_\tensor^{G'}(\alpha^*y)\leq t'$. If all vertices in the simplex $\alpha^*y$ are the same, this follows from \cref{lem:graph_distance_is_a_bound}, condition \ref{cond:l1} and $\tensor$ being non-decreasing:
\begin{align*}
\mathcal{L}_\tensor^{G'}(\alpha^*y)&=G'\left(v_{\alpha(0)}, v_{\alpha(0)}\right)\leq G\left(v_{\alpha(0)}, v_{\alpha(0)}\right)\atensor\vec{\delta}_\atensor(G,G')\\
&=\mathcal{L}_\tensor^{G}(\alpha^*y)\atensor\vec{\delta}_\atensor(G,G')\leq t\atensor\vec{\delta}_\atensor(G,G')\leq t\atensor\vec{\delta}_\atensor(G,G')^{\tensor n}=t'.
\end{align*}
If $\alpha^*y$ is non-degenerate, i.e., there is at least some pair $i\neq j$ such that $v_{\alpha (i)}\neq v_{\alpha (j)}$, we have
\begin{align*}
\mathcal{L}_\tensor^{G'}(\alpha^*y)&=\bigtensor_{\mathclap{v_{\alpha(i-1)}\neq v_{\alpha(i)}}}G'\left(v_{\alpha(i-1)}, v_{\alpha(i)}\right)\leq\bigtensor_{\mathclap{v_{\alpha(i-1)}\neq v_{\alpha(i)}}}\left(G\left(v_{\alpha(i-1)}, v_{\alpha(i)}\right)\atensor\vec{\delta}_\atensor(G,G')\right)\\
&\leq \mathcal{L}_\tensor^{G}(\alpha^*y)\atensor\bigtensor_{\mathclap{v_{\alpha(i-1)}\neq v_{\alpha(i)}}}\vec{\delta}_\atensor(G,G')\leq t\atensor \vec{\delta}_\atensor(G,G')^{\tensor n}=t',
\end{align*}
where the first inequality follows from \cref{lem:graph_distance_is_a_bound} and \ref{cond:l1} for $\tensor$, the second uses the inequality from \ref{cond:bl2},  and the last follows from \ref{cond:l1} for $\atensor$ and the fact that  $v_{\alpha(i-1)}\neq v_{\alpha(i)}$ can happen at most $n$ times.
\end{proof}

Passing to homology, \cref{stabilitylemma} gives us interleaving guarantees in terms of the directed graph distance.

\begin{theorem}[Directed Graph Distance Stability]\label{stability1}
\sloppy Let $(L,\leq,\tensor,\atensor,e)$ be a symmetric duoidal lattice. For any two $L$-graphs $G,G'\colon V\times V\to L$, we have that $H_n(R_\tensor^\bullet(G))$ and $H_n(R_\tensor^\bullet(G'))$ are $(\vec{\delta}_\atensor(G,G')^{\tensor (n+1)},\,\vec{\delta}_\atensor(G',G)^{\tensor (n+1)})$-interleaved.
\end{theorem}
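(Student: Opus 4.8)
The plan is to promote the skeleton inclusions of \cref{stabilitylemma} to maps on $H_n$ and check that they assemble into an interleaving. Set $\delta=\vec{\delta}_\atensor(G,G')^{\tensor(n+1)}$ and $\epsilon=\vec{\delta}_\atensor(G',G)^{\tensor(n+1)}$. First I would apply \cref{stabilitylemma} with $n$ replaced by $n+1$ (and, for the second map, with the roles of $G$ and $G'$ swapped), obtaining for every $t\in L$ inclusions of simplicial subsets of $EV$
\begin{equation*}
\operatorname{sk}_{n+1}\!\left(R_\tensor^t(G)\right)\hookrightarrow\operatorname{sk}_{n+1}\!\left(R_\tensor^{t\atensor\delta}(G')\right),\qquad
\operatorname{sk}_{n+1}\!\left(R_\tensor^t(G')\right)\hookrightarrow\operatorname{sk}_{n+1}\!\left(R_\tensor^{t\atensor\epsilon}(G)\right).
\end{equation*}
Note that the naive inclusion $R_\tensor^t(G)\hookrightarrow R_\tensor^{t\atensor\delta}(G')$ of the \emph{full} simplicial sets generally fails, since a $k$-simplex accumulates an error of $\vec{\delta}_\atensor(G,G')^{\tensor k}$ which is unbounded in $k$; this is exactly why passing through the $(n+1)$-skeleton is essential.

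Next I would use that for any simplicial set $X$ the inclusion $\operatorname{sk}_{n+1}X\hookrightarrow X$ induces a natural isomorphism $H_n(\operatorname{sk}_{n+1}X)\xrightarrow{\;\cong\;}H_n(X)$: the two simplicial sets have the same simplices — hence the same chain groups and boundary maps — in degrees $\leq n+1$, and $H_n$ depends only on $C_{n-1}$, $C_n$, $C_{n+1}$ and the boundaries between them. Composing these isomorphisms with the maps induced on $H_n$ by the two inclusions above yields, for all $t\in L$, $\mathbb{K}$-linear maps
\begin{equation*}
F_t\colon H_n\!\left(R_\tensor^t(G)\right)\to H_n\!\left(R_\tensor^{t\atensor\delta}(G')\right),\qquad
\widetilde{G}_t\colon H_n\!\left(R_\tensor^t(G')\right)\to H_n\!\left(R_\tensor^{t\atensor\epsilon}(G)\right),
\end{equation*}
where $\widetilde{G}_t$ plays the role of $G_t$ in the definition of a $(\delta,\epsilon)$-interleaving (renamed to avoid clashing with the graph $G$).

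Finally I would verify the two interleaving squares and the two interleaving triangles. Every arrow in sight — the $F_t$, the $\widetilde{G}_t$, and the structure maps $h,h'$ of the modules $H_n(R_\tensor^\bullet(G))$ and $H_n(R_\tensor^\bullet(G'))$ — is induced on $H_n$ by an inclusion of simplicial subsets of $EV$ that is the identity $(v_0,\dots,v_k)\mapsto(v_0,\dots,v_k)$ on simplices, so all the relevant diagrams of simplicial sets commute strictly and functoriality of $H_n$ (together with naturality of $H_n(\operatorname{sk}_{n+1}(-))\cong H_n(-)$) transports this to the homology diagrams. For the triangles: the composite $\operatorname{sk}_{n+1}(R_\tensor^t(G))\hookrightarrow\operatorname{sk}_{n+1}(R_\tensor^{t\atensor\delta}(G'))\hookrightarrow\operatorname{sk}_{n+1}(R_\tensor^{t\atensor\delta\atensor\epsilon}(G))$ is the skeleton inclusion $\operatorname{sk}_{n+1}(R_\tensor^t(G))\hookrightarrow\operatorname{sk}_{n+1}(R_\tensor^{t\atensor\delta\atensor\epsilon}(G))$, which induces the structure map $h$, so $\widetilde{G}_{t\atensor\delta}\circ F_t=h$; the other triangle and the squares are identical in spirit (using commutativity and associativity of $\atensor$). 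Hence $(F,\widetilde{G})$ is a $(\delta,\epsilon)$-interleaving. I expect the only genuine subtlety to be the bookkeeping in the first two steps — remembering to use $\operatorname{sk}_{n+1}$ rather than $\operatorname{sk}_n$ and justifying $H_n(\operatorname{sk}_{n+1}X)\cong H_n(X)$ — after which commutativity is essentially automatic.
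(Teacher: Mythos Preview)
Your proposal is correct and follows essentially the same route as the paper: apply \cref{stabilitylemma} at the $(n+1)$-skeleton level, invoke $H_n(X)\cong H_n(\operatorname{sk}_{n+1}X)$, and observe that all maps are induced by inclusions in $EV$ so the interleaving diagrams commute. The paper's write-up is terser (it records the $n$-skeleton interleaving for all $m$ and then specializes), but the content and the key bookkeeping point you flag---using $\operatorname{sk}_{n+1}$ rather than $\operatorname{sk}_n$---are the same.
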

\begin{proof}
    Fix $t\in L$, and write $s=\vec{\delta}_\atensor(G,G')^{\tensor n}$ and $s'=\vec{\delta}_\atensor(G',G)^{\tensor n}$. From \cref{stabilitylemma}, we get compositions of simplicial maps
    \begin{equation*}
        \operatorname{sk}_n(R_\tensor^t(G))\hookrightarrow \operatorname{sk}_n(R_\tensor^{t\atensor s}(G'))\hookrightarrow \operatorname{sk}_n(R_\tensor^{t\atensor s\atensor s'}(G)),
    \end{equation*}
    \begin{equation*}
        \operatorname{sk}_n(R_\tensor^t(G'))\hookrightarrow \operatorname{sk}_n(R_\tensor^{t\atensor s'}(G))\hookrightarrow \operatorname{sk}_n(R_\tensor^{t\atensor s\atensor s'}(G'))
    \end{equation*}
    that are both inclusions. In particular, we get the desired interleaving between the persistence modules $H_m(\operatorname{sk}_n(R_\tensor^\bullet(G)))$ and $H_m(\operatorname{sk}_n(R_\tensor^\bullet(G')))$ for all $m,n\geq 0$. The result follows from the general fact that $H_n(X)=H_n(\operatorname{sk}_{n+1}(X))$ for every simplicial set $X$.
\end{proof}

Going to the undirected graph distance, we get the following corollary of \cref{stability1}:

\graphdistancestability
\begin{proof}
Since $\sup\{s,t\}$ is an upper bound for $s$ and $t$ in $L$, by \ref{cond:l1} we have get that $\sup\{s,t\}^{\tensor n}$ is an upper bound for $s^{\tensor n}$ and $t^{\tensor n}$. In particular, we have $\sup\{s^{\tensor n}, t^{\tensor n}\}\leq\sup\{s,t\}^{\tensor n}$, so
\begin{equation*}
    \sup\left\{\vec{\delta}_\atensor(G,G')^{\tensor n}, \vec{\delta}_\atensor(G',G)^{\tensor n}\right\}\leq\sup\left\{\vec{\delta}_\atensor(G,G'), \vec{\delta}_\atensor(G',G)\right\}^{\tensor n}=\delta_\atensor(G,G')^{\tensor n}.
\end{equation*}
The result now follows from \cref{stability1} and \cref{lem:max_interleaved}.
\end{proof}

Combining \cref{cor:func_graph_distance_stability}, \cref{lem:inf_product_compatible} and \cref{lem:surjection_induce_isomorphism_on_homology}, we consider correspondences between vertex sets and obtain our main stability result. 

\correspondencestability
\begin{proof}
    For every correspondence $C\subseteq V_1\times V_2$ with projection maps $\pi_i\colon C\to V_i$, we have by \cref{cor:func_graph_distance_stability} that $H_n(R_\tensor^\bullet(\pi_1^*G_1))$ and $H_n(R_\tensor^\bullet(\pi_2^*G_2))$ are $\delta_\atensor(\pi_1^*G_1,\pi_2^*G_2)^{\tensor (n+1)}$-interleaved. Since $\pi_i$ is surjective, it follows from \cref{lem:surjection_induce_isomorphism_on_homology} that $R_\tensor^\bullet(\pi_i^*G_i)$ and  $R_\tensor^\bullet(G_i)$ are naturally homotopy equivalent, and hence there is a $\delta_\atensor(\pi_1^*G_1,\pi_2^*G_2)^{\tensor (n+1)}$-interleaving between $H_n(R_\tensor^\bullet(G_1))$ and $H_n(R_\tensor^\bullet(G_2))$.
\end{proof}

We now demonstrate how our stability results specialize to existing stability results for the directed Rips filtration from \cite{turner2019rips} and the $\ell_p$-Vietoris-Rips simplicial set from \cite{ivanov2024ell_p}.

\begin{example}[Directed Rips Stability]\label{ex:directed_rips_stability}
If $\tensor=\sup$, then for $t\in L$, the $n$-fold product $t^{\tensor n}$ is just $t$. Hence, for every correspondence $C\in\mathcal{C}(V_1,V_2)$, we get a $\delta_\atensor(\pi_1^*G_1,\pi_2^*G_2)$-interleaving between $H_n(R^\bullet_{+_\infty}(G_1))$ and $H_n(R^\bullet_{+_\infty}(G_2))$ by \cref{thm:correspondence_stability}. In particular, for the symmetric duoidal lattice $([0,\infty],\leq,+_\infty,+,0)$, 
we recover the stability result of the directed Rips filtration with respect to the network distance \cite[Theorem~21]{turner2019rips}. That is, we have
\begin{equation*}
    d_I(H_n(R^\bullet_{+_\infty}(G_1),H_n(R^\bullet_{+_\infty}(G_2)))\leq 2d_\mathcal{N}(G_1,G_2)
\end{equation*}
for all $n\geq 0$.
\end{example}

\begin{example}[$\ell_p$-Vietoris-Rips Stability]\label{ex:ell_p_vietoris_rips_stability}
Consider the symmetric duoidal lattice $([0,\infty],\leq,+_p,+,0)$ for some $1\leq p\leq\infty$. Note that for $t\in [0,\infty]$, the $n$-fold product $t^{+_pn}=n^{1/p}t$. Thus, given two finite metric spaces $(V_1,d_1)$ and $(V_2,d_2)$, we get by \cref{thm:correspondence_stability} that
\begin{equation*}
d_I(H_n(R_{+_p}^\bullet(d_1)), H_n(R_{+_p}^\bullet(d_2))) \leq 2(n+1)^{1/p}\operatorname{d}_{GH}(V_1,V_2)
\end{equation*}
for all $n\geq 0$ since $\operatorname{d}_{\mathcal{N}}(d_1, d_2)=\operatorname{d}_{GH}(V_1,V_2)$. Hence, we recover the stability restult \cite[Corollary~4.3]{ivanov2024ell_p} for the $\ell_p$-Vietoris-Rips simplicial sets, with a constant factor $2(n+1)^{1/p}$ instead of $2(n+2)^{1/p}$.
\end{example}

\section{Multipersistence and the Generalized Network Distance}\label{sec:multipersistence}

Multiparameter persistence is a generalization of standard (one-dimensional) persistence that allows for the analysis of data that varies across multiple parameters simultaneously. In the multiparameter setting, where one usually considers persistence modules $H\colon\mathbb{R}^m\to\operatorname{Vect}_\mathbb{K}$, the interleaving distance is often defined along the diagonal as follows \cite{botnan2022introduction,bjerkevik2020computing,lesnick2015theory}:
\begin{equation}\label{eq:usual_interleaving_distance_for_multipersistence}
    d_I(H,H')=\inf\{\epsilon\geq0\mid H\text{ and } H'\text{ are }\diag(\epsilon)\text{-interleaved}\}
\end{equation}
where $\diag\colon\mathbb{R}\to\mathbb{R}^m$ denotes the diagonal map $t\mapsto(t,\ldots,t)$. We consider a slight generalization of the interleaving distance in \cref{eq:usual_interleaving_distance_for_multipersistence}. \cite{bubenik2015metrics} and \cite{de2017theory} propose a generalization of the interleaving distance in a different direction, extending it to more general persistence modules via sublinear projections and superlinear families.

We consider $m$-fold products $T^m$ of symmetric duoidal lattices on the form $T=(T,\leq,\tensor,\atensor,e)$ where $(T,\leq)$ is a totally ordered set. On $T^m$, we use the product order and coordinatewise monoidal products (see \cref{sec:productappendix} for more details). Abusing notation, we write $T^m=(T^m,\leq,\tensor,\atensor,e)$ for the rest of this section. The main example to keep in mind is $([0,\infty]^m,\leq,+_p,+_q,0)$ where $1\leq q\leq p\leq\infty$.

We always have the order-preserving maps $\diag\colon T\to T^m$ and $\supa\colon T^m\to T$ given by $t\mapsto(t,\ldots,t)$ and $(t_1,\ldots,t_m)\mapsto\sup_i t_i$, respectively. These maps satisfy $\Id_{T^m}\leq \diag\circ \supa$ and $\supa\circ \diag=\Id_T$.

\begin{definition}
For any two $T^m$-persistence modules $H$ and $H'$, we define the \textbf{interleaving distance} (with respect to $\atensor$) as
\begin{equation*}
    d_I^\atensor(H,H')=\inf\{\epsilon\in T\mid H\text{ and }H'\text{ are }\diag(\epsilon)\text{-interleaved with respect to }\atensor\}
\end{equation*}
where $\diag\colon T\to T^m$ is the map $t\mapsto(t,\ldots,t)$.
\end{definition}

If $T=[0,\infty]$ and $\atensor$ is addition of real numbers, then our interleaving distance $d_I^\atensor(H,H')$ agrees with the usual one in \cref{eq:usual_interleaving_distance_for_multipersistence}.

\subsection{The Generalized Network Distance}\label{sec:generalized_network_distance}

We now introduce a generalized version of the network distance for $T^m$-graphs. Recall that we write $\mathcal{C}(V_1, V_2)$ for the collection of all correspondences between sets $V_1$ and $V_2$.

\begin{definition}[Generalized Network Distance]
For two $T^m$-graphs $G_1\colon V_1\times V_1\to T^m$ and $G_2\colon V_2\times V_2\to T^m$, we define the \textbf{generalized network distance} between $G_1$ and $G_2$ as
\begin{equation}\label{eq:generalized_network_distance}
\operatorname{d}^\atensor_{\mathcal{N}}(G_1, G_2)=\inf_{C\in\mathcal{C}(V_1, V_2)}\supa(\delta_\atensor(\pi_1^*G_1, \pi_2^*G_2))
\end{equation}
where $\pi_i\colon C\to V_i$ are the projection maps, and $\supa\colon T^m\to T$ is the map $(t_1,\ldots,t_m)\mapsto\sup_i t_i$.
\end{definition}

If $T=[0,\infty]$ with the usual order and $m=1$, then $\supa:T\to T$ is the identity map and by \cref{ex:correspondence_distortion} we get that $d^+_\mathcal{N}(G_1,G_1)$ is exactly the network distance $\operatorname{d}_\mathcal{N}(G_1,G_2)$. In particular, for compact metric spaces $(V_1,d_1)$ and $(V_2,d_2)$ we have $\operatorname{d}^+_\mathcal{N}(d_1,d_2)=\operatorname{d}_{\textrm{GH}}(V_1,V_2)$. Similarly, for $1\leq p \leq \infty$ we have that $\operatorname{d}^{+_p}_\mathcal{N}(d_1,d_2)=2^{1/p} \operatorname{d}_{\textrm{GH}}^{(p)}(V_1,V_2)$, where $\operatorname{d}_{\textrm{GH}}^{(p)}$ is the $p$-Gromov-Hausdorff distance from \cite{memoli2021gromov} and \cite{memoli2022metric}.

\begin{proposition}\label{lem:generalized_network_distance_is_psuedo_metric_like}
The generalized network distance satisfies the following properties:
\begin{enumerate}
    \item $\operatorname{d}^\atensor_{\mathcal{N}}(G_1,G_2)=e$ whenever $G_1=G_2$,
    \item $\operatorname{d}^\atensor_{\mathcal{N}}(G_1,G_2)=\operatorname{d}^\atensor_{\mathcal{N}}(G_2,G_1)$, and
    \item $\operatorname{d}^\atensor_{\mathcal{N}}(G_1,G_2)\leq\operatorname{d}^\atensor_{\mathcal{N}}(G_1,G_3)\atensor\operatorname{d}^\atensor_{\mathcal{N}}(G_3,G_2)$
\end{enumerate}
for all $G_1,G_2,G_3\in\Gph(T^m)$. In particular, for the symmetric duoidal lattice $([0,\infty]^m,\leq,+_p,+,0)$ with $1\leq p\leq\infty$ and $m\geq 1$, the generalized network distance defines an extended pseudometric on $\Gph([0,\infty]^m)$.
\end{proposition}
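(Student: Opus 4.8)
The plan is to verify the three properties one at a time, leaning heavily on the metric-like properties of the (undirected) graph distance already established in Lemma \ref{lem:graph_distance_metric}, and on the fact that $p\colon T^d\to T$, $(t_1,\dots,t_d)\mapsto\sup_i t_i$, is order-preserving and monoidal (i.e. $p(s\atensor t)=p(s)\atensor p(t)$, which holds because $\atensor$ is coordinate-wise and $\sup$ distributes over a monotone monoidal product on a totally ordered set). For property (1), when $G_1=G_2$ I would pick the diagonal correspondence $C=\{(v,v)\mid v\in V\}$ (where $V=V_1=V_2$); then $\pi_1^*G_1=\pi_2^*G_2$, so $\delta_\atensor(\pi_1^*G_1,\pi_2^*G_2)=e$ by Lemma \ref{lem:graph_distance_metric}(1), hence $p(e)=e$ and the infimum in \eqref{eq:generalized_network_distance} is $e$, which is minimal in $T$ by \ref{cond:l2}. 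Property (2) is immediate from symmetry of $\delta_\atensor$ (Lemma \ref{lem:graph_distance_metric}(2)) together with the bijection $\mathcal{C}(V_1,V_2)\to\mathcal{C}(V_2,V_1)$ sending $C$ to its transpose, which swaps the roles of $\pi_1$ and $\pi_2$.

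The substantive step is the triangle inequality (3). Here I would fix correspondences $C_{12}\in\mathcal{C}(V_1,V_2)$ and $C_{23}\in\mathcal{C}(V_2,V_3)$ and form their composite $C_{13}=\{(v_1,v_3)\mid \exists\, v_2\in V_2,\ (v_1,v_2)\in C_{12},\ (v_2,v_3)\in C_{23}\}$, which is again a correspondence. The key sub-lemma is: for a suitable choice of correspondence $D\subseteq C_{12}\times_{V_2} C_{23}$ covering $C_{13}$ (the fibered product, consisting of triples $(v_1,v_2,v_3)$), with projections $\rho_i$ to $V_i$, one has
\begin{equation*}
\delta_\atensor(\rho_1^*G_1,\rho_3^*G_3)\leq \delta_\atensor(\rho_1^*G_1,\rho_2^*G_2)\atensor\delta_\atensor(\rho_2^*G_2,\rho_3^*G_3)
\end{equation*}
directly from Lemma \ref{lem:graph_distance_metric}(3) applied on the common vertex set $D$, and moreover $\delta_\atensor(\rho_1^*G_1,\rho_2^*G_2)$ is controlled by $\delta_\atensor(\pi_1^*G_1,\pi_2^*G_2)$ since the map $D\to C_{12}$ is surjective and the induced graphs pull back compatibly (a surjection on vertex sets cannot increase $\vec\delta_\atensor$, as one takes a supremum over a larger index set — this should be recorded as a small observation, or it may already follow from the monotonicity used in the proof of Lemma \ref{lem:graph_distance_metric}). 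Applying $p$, using that $p$ is order-preserving and monoidal, and then passing to infima over $C_{12}$ and $C_{23}$ separately, gives
\begin{equation*}
\operatorname{d}^\atensor_{\mathcal{N}}(G_1,G_2)\leq p(\delta_\atensor(\rho_1^*G_1,\rho_3^*G_3))\leq p(\delta_\atensor(\rho_1^*G_1,\rho_2^*G_2))\atensor p(\delta_\atensor(\rho_2^*G_2,\rho_3^*G_3)),
\end{equation*}
and taking infima over the two independent choices yields (3). (Here one uses that $\atensor$ is functorial, \ref{cond:l1}, so that an infimum of a sum is at most the sum of the infima when the two summands vary independently.)

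The main obstacle I anticipate is the bookkeeping around composing correspondences: one must check that $C_{13}$ covers all of $V_1$ and $V_3$, that the fibered-product vertex set $D$ maps onto $C_{12}$, $C_{23}$, and $C_{13}$, and that the various pullback graphs $\rho_i^*G_i$ on $D$ agree with the pullbacks of $\pi_j^*G_j$ along the induced surjections, so that Lemma \ref{lem:graph_distance_metric}(3) can be invoked on a single shared vertex set. None of this is deep, but it is the only place where care is needed; everything else is a routine transfer of the corresponding fact for $\delta_\atensor$ through the monotone monoidal map $p$. The final sentence about $([0,\infty]^d,\leq,+_p,+,0)$ then follows because in that case $e=0$, the infimum over the finite set $\mathcal{C}(V_1,V_2)$ is attained, and properties (1)–(3) are exactly the axioms of an extended pseudometric (symmetry, vanishing on the diagonal, triangle inequality), noting that $\operatorname{d}^\atensor_{\mathcal{N}}$ need not separate distinct graphs, hence "pseudo".
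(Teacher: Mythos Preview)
Your approach is correct in substance but differs from the paper's, and it carries two minor slips worth fixing. First, your indices are scrambled: to match the statement $\operatorname{d}^\atensor_{\mathcal{N}}(G_1,G_2)\leq\operatorname{d}^\atensor_{\mathcal{N}}(G_1,G_3)\atensor\operatorname{d}^\atensor_{\mathcal{N}}(G_3,G_2)$ you should start with correspondences in $\mathcal{C}(V_1,V_3)$ and $\mathcal{C}(V_3,V_2)$, form the fibered product $D$ over $V_3$, and bound $\operatorname{d}^\atensor_{\mathcal{N}}(G_1,G_2)$ via $p(\delta_\atensor(\rho_1^*G_1,\rho_2^*G_2))$ (not $\rho_3^*G_3$). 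Second, $p$ is \emph{not} monoidal in general (take $T=[0,\infty]$, $\atensor=+$, $d=2$, $s=(1,0)$, $t=(0,1)$); you only have the lax inequality $p(s\atensor t)\leq p(s)\atensor p(t)$, which is all you actually use. Also, your remark that surjections ``cannot increase $\vec\delta_\atensor$'' is in fact an equality when the map is surjective, which is what you need to match the factors on $D$ with those on $C_{13}$ and $C_{32}$. Finally, for passing to infima you should note that each $\mathcal{C}(V_i,V_j)$ is finite and $T$ is totally ordered, so the infimum is attained and you can simply fix minimizing correspondences.

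The paper proceeds differently for property~(3): it forms only the composite correspondence $C\subseteq V_1\times V_2$ (pairs, not your fibered product of triples) and then, instead of invoking \cref{lem:graph_distance_metric}(3), proves a separate auxiliary inequality (\cref{lem:adjoint_sup_triangle_inequality}) showing $\vec\delta_\atensor(G,G')\leq\sup_{v,v'}\lambda_{G(v,v')}(u)\atensor\sup_{v,v'}\lambda_u(G'(v,v'))$ for every $u\in L$, which it applies with $u=G_3(v_3,v_3')$ for chosen witnesses $v_3,v_3'$. Your fibered-product argument is cleaner: by enlarging the vertex set to $D$ you put all three pulled-back graphs on a common domain and can use \cref{lem:graph_distance_metric}(3) as a black box, avoiding the adjoint lemma entirely. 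The paper's route keeps the vertex set smaller and makes the role of the Galois connection explicit, at the cost of an extra lemma. Both arguments ultimately rely on the same facts about $p$ and on the finiteness of the set of correspondences to pass to infima.
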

\begin{proof}
See \cref{proof:generalized_network_distance_is_psuedo_metric_like}.
\end{proof}

In contrast to the graph distance, the generalized network distance between two distinct graphs can be $e$ as the following example demonstrates:
\begin{example}
Let $(T,\leq,\tensor,\atensor,e)$ be a symmetric duoidal lattice with $(T,\leq)$ totally ordered. Consider the vertex sets $V_1=\{v_1\}$ and $V_2=\{v_2,v'_2\}$, and define the $T$-graphs $G_i\colon V_i\times V_i\to T$ for $i=1,2$ by letting $G_i(v,v')=e$ for all $v,v'\in V_i$. There is a unique correspondence $C=\{(v_1,v_2),(v_1,v'_2)\}$ between $V_1$ and $V_2$. Writing $\pi_i$ for the projection maps, we have $\pi_1^*G_1=\pi_2^*G_2=G_2$ so $\delta_\atensor(\pi_1^*G_1,\pi_2^*G_2)=e$. Hence, $\operatorname{d}_\mathcal{N}^\atensor(G_1,G_2)=e$ even though $G_1\neq G_2$.
\end{example}

\subsection{Multipersistence Stability}\label{sec:multipersistence_stability}
We now prove stability in the case of $T^m$-graphs by giving an upper bound of the interleaving distance between monoidal Rips persistence modules in terms of the generalized network distance.

\generalizednetworkdistancestability
\begin{proof}
\sloppy Let $C\in\mathcal{C}(V_1,V_2)$ be a correspondence with projection maps $\pi_i\colon C\to V_i$. Write $\epsilon_C$ for the element $\delta_\atensor(\pi_1^*G_1,\pi_2^*G_2)$ in $T^m$. By \cref{thm:correspondence_stability}, the persistence modules $H_n(R^\bullet_\tensor(G_1))$ and $H_n(R^\bullet_\tensor(G_2))$ are $\epsilon_C^{\tensor (n+1)}$-interleaved, and thus also $\diag(\supa(\epsilon_C^{\tensor (n+1)}))$-interleaved by \cref{lem:max_interleaved} since $\Id_{T^m}\leq\diag\circ\supa$. Moreover, it is straightforward to check that $\supa(\epsilon_C^{\tensor (n+1)})=\supa(\epsilon_C)^{\tensor (n+1)}$, using that $\tensor$ is defined pointwise and that $T$ is totally ordered.

Now, let $C_0\in\mathcal{C}(V_1,V_2)$ be a correspondence minimizing the generalized network distance $d_\mathcal{N}^\atensor(G_1,G_2)$. Such a correspondence exists because $T$ is totally ordered and there are only finitely many correspondences between the finite sets $V_1$ and $V_2$. By \cref{lem:inf_product_compatible}, it follows that 
\begin{align*}
d_\mathcal{N}^\atensor(G_1,G_2)^{\tensor (n+1)}&=\left(\inf_{C\in\mathcal{C}(V_1,V_2)} \supa(\epsilon_C)\right)^{\tensor (n+1)}\\&=\inf_{C\in\mathcal{C}(V_1,V_2)} \supa(\epsilon_C)^{\tensor (n+1)}=\supa(\epsilon_{C_0})^{\tensor (n+1)}
\end{align*}
and hence the persistence modules are $\diag(d_\mathcal{N}^\atensor(G_1,G_2)^{\tensor (n+1)})$-interleaved. Since the interleaving distance is defined as an infimum, the result now follows. For the case $\otimes=\sup$, observe that $t^{\tensor (n+1)}=t$ since $\sup\{t,t\}=t$.
\end{proof}

\subsection{Example: The Sublevel Rips Bifiltration}\label{sec:sublevel_rips_bifiltration}

One example of multipersistence is the function Rips bifiltration, which refers to the two-parameter sublevel- and superlevel Rips bifiltrations built from a real-valued function on a finite metric space \cite{botnan2022introduction,alonso2024probabilistic,lesnick2019lecture,lesnick2015theory}. In this section, we demonstrate the flexibility of our approach by showing that the geometric realizations of the sublevel Rips bifiltration and the monoidal Rips filtration of a particular graph are of the same homotopy type. We also adapt \cref{thm:gen_network_dist_stability} to this particular bifiltration. Let us start by defining the usual sublevel Rips bifiltration.

\begin{definition}
Let $(V,d)$ be a finite (extended) metric space and let $\gamma\colon V\to[0,\infty]$ be a function. The \textbf{sublevel Rips bifiltration} $\operatorname{Rips}^\uparrow(\gamma)$ is the filtered simplicial complex defined in filtration degree $(t,s)\in[0,\infty]^2$ as the Rips complex $\operatorname{Rips}\left(\gamma^{-1}[0,s]\right)_t$. 
\end{definition}
A simplex $\sigma\subseteq V$ is in $\operatorname{Rips}^\uparrow(\gamma)_{t,s}$ precisely when $\gamma(v)\leq s$ and $d(v,v')\leq t$ for all $v,v'\in\sigma$.  In applications, the function $\gamma$ can, for example, be some density function on the points in $V$, such as a kernel density estimate. In this case, the filtration is often referred to as the density Rips bifiltration. The sublevel Rips bifiltration can also be defined for functions $\gamma\colon V\to\mathbb{R}$, but since $V$ is finite, one can shift the values of $\gamma$ to $[0,\infty]$ without losing information.

Now, consider the symmetric duoidal lattice $([0,\infty]^2,\leq,\sup,+,0)$ with the usual product order and coordinatewise products. Given a finite (extended) metric space $(V,d)$ and a function $\gamma\colon V\to[0,\infty]$, define the graph $G_\gamma\colon V\times V\to [0,\infty]^2$ by letting $G_\gamma(v,v')=(d(v,v'),\gamma(v))$. 
\begin{proposition}\label{prop:sublevel_rips_equivalent}
    The geometric realizations of $R^{t,s}_\tensor(G_\gamma)$ and $\operatorname{Rips}^\uparrow(\gamma)_{t,s}$ are naturally homotopy equivalent for all $s,t\in[0,\infty]$. In particular, they have isomorphic persistent homology.
\end{proposition}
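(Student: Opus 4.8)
The plan is to identify $R^{t,s}_\tensor(G_\gamma)$ with $\operatorname{Sing}(K)$ for a suitable simplicial complex $K$ and then invoke \cref{lem:geometric_realizations_homotopy_equivalent}. First I would unwind the definition of $R^{t,s}_\tensor(G_\gamma)$: a tuple $y=(v_0,\dots,v_n)$ lies in it precisely when $\mathcal{L}^{G_\gamma}_\tensor(\alpha^*y)\leq(t,s)$ for every $\alpha\in\Delta([m],[n])$. Since $\tensor=\max$ is computed coordinate-wise and, crucially, is idempotent, the filtration value of any subtuple simplifies: for a non-degenerate subtuple $(v_{\alpha(0)},\dots,v_{\alpha(m)})$ the $\max$-product over its consecutive distinct edges is $\bigl(\max_k d(v_{\alpha(k-1)},v_{\alpha(k)}),\ \max_k \gamma(v_{\alpha(k-1)})\bigr)$ where the maxima range over indices with distinct consecutive vertices; taking the supremum over all $\alpha$, the first coordinate becomes $\max_{i,j}d(v_i,v_j)$ and the second becomes $\max_i\gamma(v_i)$ (the degenerate case $v_0=\dots=v_n$ gives $(0,\gamma(v_0))$, which is dominated). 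Hence $y\in R^{t,s}_\tensor(G_\gamma)$ if and only if $d(v_i,v_j)\leq t$ for all $i,j$ and $\gamma(v_i)\leq s$ for all $i$ — exactly the condition that $\{v_0,\dots,v_n\}\in\operatorname{Rips}^\uparrow(\gamma)_{t,s}$ as a set.

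The key step is then to observe that this shows $R^{t,s}_\tensor(G_\gamma)=\operatorname{Sing}\bigl(\operatorname{Rips}^\uparrow(\gamma)_{t,s}\bigr)$ as simplicial subsets of $EV$: membership of a tuple depends only on its underlying vertex set and on whether that set is a simplex of the Rips complex $K:=\operatorname{Rips}^\uparrow(\gamma)_{t,s}=\operatorname{Rips}(\gamma^{-1}[0,s])_t$. Applying \cref{lem:geometric_realizations_homotopy_equivalent} gives a natural homotopy equivalence $|R^{t,s}_\tensor(G_\gamma)|=|\operatorname{Sing}(K)|\simeq|K|=|\operatorname{Rips}^\uparrow(\gamma)_{t,s}|$. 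Naturality in the bifiltration parameter follows because both the identification with $\operatorname{Sing}(K)$ and the homotopy equivalence of \cref{lem:geometric_realizations_homotopy_equivalent} are compatible with the inclusions $K_{t,s}\hookrightarrow K_{t',s'}$ for $(t,s)\leq(t',s')$. The isomorphism on persistent homology is then immediate.

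The main obstacle is the bookkeeping in the first step: one must be careful that taking the supremum over \emph{all} order-preserving $\alpha\colon[m]\to[n]$ (including those producing repeated vertices and those of length $m=0,1$) really does reduce, thanks to idempotency of $\max$ and the minimality of the neutral element $0$, to the plain condition ``all pairwise distances $\leq t$ and all vertex values $\leq s$''. In particular one should check that the definition of $\mathcal{L}^{G_\gamma}_\tensor$ on degenerate tuples (the first case $G_\gamma(v_0,v_0)=(0,\gamma(v_0))$) and on tuples with some but not all vertices equal does not impose extra constraints beyond those already recorded, and that the condition genuinely depends only on the vertex set and not on the ordering — this last point is what makes $R^{t,s}_\tensor(G_\gamma)$ of the form $\operatorname{Sing}(K)$ rather than merely a filtered simplicial set of directed flavour.
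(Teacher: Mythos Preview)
Your proposal is correct and follows essentially the same approach as the paper: identify $R^{t,s}_\tensor(G_\gamma)$ with $\operatorname{Sing}(\operatorname{Rips}^\uparrow(\gamma)_{t,s})$ by unwinding the $\max$-product (handling the degenerate and non-degenerate subtuples separately), and then invoke \cref{lem:geometric_realizations_homotopy_equivalent}. The paper's proof is slightly terser but the argument and its structure are the same; your remark about the bookkeeping being the main obstacle is apt, and in particular note that the condition $\gamma(v_n)\leq s$ is obtained from the degenerate sub-$0$-tuple $\alpha\colon[0]\to[n]$, $\alpha(0)=n$, since $v_n$ never appears as the source vertex of an edge in a non-degenerate subtuple.
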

\begin{proof}
We show that $\operatorname{Sing}(\operatorname{Rips}^\uparrow(\gamma)_{t,s})=R^{t,s}_\tensor(G_\gamma)$ for all $(s,t)\in[0,\infty]^2$. The result then follows from \cref{lem:geometric_realizations_homotopy_equivalent} since the homotopy equivalence $|\operatorname{Sing}(\operatorname{Rips}^\uparrow(\gamma)_{t,s})|\to |\operatorname{Rips}^\uparrow(\gamma)_{t,s}|$ is natural in $\operatorname{Rips}^\uparrow(\gamma)_{t,s}$, and $\operatorname{Rips}^\uparrow(\gamma)$ is a filtered simplicial complex.

First, we consider the degenerate case where all vertices are the same. For $y=(v_0,\ldots,v_0)$, we have $\mathcal{L}^{G_\gamma}_\tensor(\alpha^*y)=G_\gamma(v_0,v_0)=(0,\gamma(v_0))$ for all $\alpha\in\Delta([m],[n])$ and consequently $y\in R^{t,s}_\otimes(G_\gamma)$ if and only if $\gamma(v_0)\leq s$. 

In the non-degenerate case with $y=(v_0,\ldots,v_n)$ and $\alpha\in\Delta([m],[n])$, we have
\begin{equation*}
\mathcal{L}^{G_\gamma}_\tensor(\alpha^*y) = \bigtensor_{v_{\alpha(i-1)}\neq v_{\alpha(i)}}G_\gamma(v_{\alpha(i-1)}, v_{\alpha(i)})= \left(\sup_i d(v_{\alpha(i-1)}, v_{\alpha(i)}),\, \sup_i\gamma(v_{\alpha(i-1)})\right)
\end{equation*}
and hence, using that the metric is symmetric, we get
\begin{equation*}
\max_{\alpha\in\Delta([m],[n])}\mathcal{L}^{G_\gamma}_\tensor(\alpha^*y)=\left(\sup_{i,j\in[n]}d(v_i,v_j),\, \sup_{i\in[n]}\gamma(v_i)\right).
\end{equation*}
So $y\in R^{t,s}_\otimes(G_\gamma)$ if and only if $d(v_i,v_j)\leq t$ and $\gamma(v_i)\leq s$ for all $i,j$.
Thus, $\operatorname{Sing}(\operatorname{Rips}^\uparrow(\gamma)_{t,s})=R^{t,s}_\tensor(G_\gamma)$ and hence $|\operatorname{Rips}^\uparrow(\gamma)_{t,s}|$ and $|R^{t,s}_\tensor(G_\gamma)|$ have the same homotopy type for all $(s,t)\in[0,\infty]^2$ by \cref{lem:geometric_realizations_homotopy_equivalent}. 
\end{proof}

We now demonstrate how our general stability results apply to the sublevel Rips bifiltration. Given $(V_1,d_1,\gamma_1)$ and $(V_2,d_2,\gamma_2)$ where $(V_i,d_i)$ are finite (extended) metric spaces and $\gamma_i\colon V_i\to[0,\infty]$ are functions, we consider the induced graphs $G_{\gamma_i}\colon V_i\times V_i\to [0,\infty]^2$ as above. Let $C\in\mathcal{C}(V_1,V_2)$ be some correspondence between $V_1$ and $V_2$ with projection maps $\pi_1$ and $\pi_2$. Suppose $c,c'\in C$ and write $c=(v_1,v_2)$ and $c'=(v_1',v_2')$. Since $[0,\infty]^2$ is equipped with the product order, we get that
\begin{multline*}
\left\{(t,s)\in [0,\infty]^2\mid\pi_2^*G_{\gamma_2}(c,c')\leq\pi_1^*G_{\gamma_1}(c,c')\atensor (t,s)\right\}\\=\left\{t\in [0,\infty]\mid d_2(v_2,v_2')\leq d_1(v_1,v_1')+t\right\}\times\left\{s\in[0,\infty]\mid\gamma_2(v_2)\leq\gamma_1(v_1)+s\right\}.
\end{multline*}
It follows that the generalized network distance $\operatorname{d}_{\mathcal{N}}^+(G_{\gamma_1},G_{\gamma_2})$ between $G_{\gamma_1}$ and $G_{\gamma_2}$ is given by the following infimum:
\begin{equation*}
\inf_{C\in\mathcal{C}(V_1,V_2)}\max\left\{\sup_{(v_1,v_2),(v_1',v_2')\in C}\vert d_1(v_1,v_1')-d_2(v_2,v_2')\vert,\sup_{(v_1,v_2)\in C}\vert\gamma_1(v_1)-\gamma_2(v_2)\vert\right\},
\end{equation*}
and by \cref{thm:gen_network_dist_stability}, the interleaving distance between the corresponding persistence modules is bounded by this expression. Note that the Gromov-Hausdorff distance between $(V_1,d_1)$ and $(V_2,d_2)$ can be written as 
\begin{equation*}
2d_{GH}(V_1,V_2)=\inf_{C\in\mathcal{C}(V_1,V_2)}\sup_{(v_1,v_2),(v_1',v_2')\in C}\vert d_1(v_1,v_1')-d_2(v_2,v_2')\vert  
\end{equation*}
and the Hausdorff distance between the images $\gamma_1(V_1)$ and $\gamma_2(V_2)$ in $[0,\infty]$ can be written as 
\begin{equation*}
d_H(\gamma_1(V_1),\gamma_2(V_2))=\inf_{C\in\mathcal{C}(V_1,V_2)}\sup_{(v_1,v_2)\in C}\vert\gamma_1(v_1)-\gamma_2(v_2)\vert.    
\end{equation*}
Thus, the generalized network distance in this case is greater or equal to the maximum of $2d_{GH}(V_1,V_2)$ and $d_H(\gamma_1(V_1),\gamma_2(V_2))$. The following example demonstrates that the maximum of $2d_{GH}(V_1,V_2)$ and $d_H(\gamma_1(V_1),\gamma_2(V_2))$ does not give an upper bound for the interleaving distance between sublevel Rips persistence modules:

\begin{example}
Let $V_1=\{a,b\}$ with $d(a,b)=2$, $\gamma_1(a)=2$ and $\gamma_1(b)=0$. Furthermore, let $V_2=\{c,d,e\}$ with $d(c,d)=d(d,e)=3$, $d(c,e)=1$, $\gamma_2(c)=\gamma_2(d)=0$ and $\gamma_2(e)=1$. Considering the $25$ possible correspondences between $V_1$ and $V_2$, one can check that $\operatorname{d}_{\mathcal{N}}^+(G_{\gamma_1},G_{\gamma_2})=2$. On the other hand, both $2d_{GH}(V_1,V_2)$ and $d_H(\gamma_1(V_1),\gamma_2(V_2))$ are $1$. Moreover, it can be shown that for $\epsilon<3/2$, the zero-dimensional persistent homology modules corresponding to the sublevel Rips bifiltrations can not be $\diag(\epsilon)$-interleaved.
\end{example}

\section{Implementation and Experimental Results}\label{sec:implementation_and_experimental_results}

In this section, we first describe how the persistent homology of the $p$-Rips filtrations can be computed, and then present our experimental results.

\subsection{\texorpdfstring{Constructing $[0,\infty]$-graphs From Data}{Constructing [0,infinity]-graphs From Data}}\label{sec:constructing_p_graphs_from_data}

A \textbf{simple directed graph} is a tuple $(V,E)$ consisting of a vertex set $V$ and a set of directed edges $E\subseteq (V\times V)\setminus\Delta_V$ where $\Delta_V=\{(v,v)\,|\, v\in V\}$ is the diagonal of $V$. An \textbf{(edge-) weighted directed graph} is a simple directed graph endowed with a weight function $\omega_E\colon E\to [0,\infty)$. Given a weighted directed graph $(V,E,\omega_E)$, we construct an induced $[0,\infty]$-graph $G\colon V\times V\to[0, \infty]$ by letting
\begin{equation*}
G(v, v') = 
     \begin{cases}
     \omega_E(v,v') &\quad\text{if }v\neq v'\text{ and }(v,v')\in E\text{,}\\
     \infty &\quad\text{if }v\neq v'\text{ and }(v,v')\notin E\text{, and}\\
     0 &\quad\text{if } v=v'.
     \end{cases}
\end{equation*}
If we also have vertex weights $\omega_V\colon V\to [0,\infty)$, we instead let $G(v,v)=\omega_V(v)$. For the case when we have a finite point cloud $X$ in a metric space $(Y, d)$, we consider the $[0,\infty]$-graph $G\colon X\times X\to[0,\infty]$ given by $G=d$.

\subsection{Implementing the \texorpdfstring{$p$}{p}-Rips Filtration}\label{sec:implementing_p_rips_filtration}

Our goal is to compute the persistent homology of the $p$-Rips filtrations. By definition, the monoidal Rips filtration $R_\otimes^t(G)$ includes degenerate simplices with repeated adjacent vertices. For example, simplices on the form $(a,a)$ and $(a,a,b)$. By removing such simplices, we obtain a Delta set (or semi-simplicial set) with naturally isomorphic homology groups by \cite[Theorem~2.1~and~2.4]{goerss2009simplicial}. In particular, they have isomorphic persistent homology modules. Delta sets are more general than simplicial complexes, but less general than simplicial sets, as they come with face maps but lack degeneracy maps (see \cite[Section~2.4]{friedman2008elementary} for a more detailed exposition). By going to Delta sets, we only need to consider non-degenerate simplices. In dimensions $0$, $1$ and $2$, they are on the form $(a)$, $(a,b)$, $(a,b,c)$ and $(a,b,a)$ where $a,b,c\in V$. For $m$ vertices, we end up with at most $m(m-1)^n$ $n$-simplices in our filtration. This results in size complexity $\mathcal{O}(m^{n_{\text{max}}+1})$ for the $p$-Rips filtration where $m$ is the number of vertices and $n_{\text{max}}$ is the maximum homological dimension we are computing.

To check the condition $\mathcal{L}^G_\otimes(\alpha^*y)\leq t$ we can assume $\alpha$ to be injective, and there are finitely ($n!$) many such injective maps. This can be improved by building the filtration iteratively. Let $f\colon R^t_\otimes(G)\to P$ be the map sending a simplex to its filtration value, i.e., $f(y)=\max_{\alpha}\mathcal{L}^G_\otimes(\alpha^*y)$ where $\alpha\in\Delta([m],[n])$. Now, 
\begin{equation*}
f(y) = \max\left(\left\{f(d_i y)\right\}_{i=0}^n\cup\{\mathcal{L}^G_\otimes(y)\} \right).
\end{equation*}
So the filtration value of $y$ depends solely on the filtration values of its faces (already computed in the previous iteration) and $\mathcal{L}^G_\otimes(y)$. We implemented $p$-Rips persistence in Python using PHAT \cite{bauer2017phat} for the persistent homology computations (with integer coefficients modulo $2$).

\subsection{Experimental Results}\label{subsec:experimental_results}
In this section, we present experimental results obtained from computing the $p$-Rips filtration of directed weighted graphs and point clouds. We used Gudhi \cite{gudhi:urm, gudhi:PersistenceRepresentations} to compute persistence images \cite{adams2017}, before applying conventional machine learning methods for regression and classification. For $\infty$-Rips persistence of point clouds, we used Gudhi's implementation of the Vietoris-Rips filtration \cite{gudhi:RipsComplex} to speed up computations, since both filtrations have the same persistent homology for symmetric functions \cite[p.~15]{turner2019rips}. Run times and filtration sizes for the $p$-Rips filtration and Flagser on randomly generated weighted directed graphs are reported in \cref{tab:filtration_runtimes}.

\subsubsection{Geometric Graph Regression}\label{sec:geometric_graph_regression}
The Directed Random Geometric Graph (DRGG) model is a random graph model introduced in \cite{michel2029} that provides directed graphs with several properties found in real-world networks. Given positive integers $n$ and $d$, and a real-valued parameter $\alpha > d+1$, the DRGG model $G(n,\alpha,d)$ generates random directed graphs with $n$ nodes. The generation starts by uniformly sampling $n$ points randomly from the unit cube $[0,1]^d$. Each point corresponds to a vertex in the final graph. A periodic boundary condition is assumed, so we can think of the points as lying on the $d$-dimensional torus $\mathbb{T}^d$. We then assign to each vertex $u$ a radius $r_u$ drawn from a Pareto distribution defined by the probability density function
\begin{equation*}
    f_\alpha(r) = \begin{cases}
        \frac{\eta}{r^\alpha}\quad\text{if }r_0\leq r\leq \frac{1}{2}\text{ and}\\
        0\quad\text{otherwise,}
    \end{cases}
\end{equation*}
where $r_0$ depends only on $n$ and $d$, and $\eta$ is a normalizing factor. The edge $(u,v)$ is added to the graph whenever the distance $d_{\mathbb{T}^d}(u,v)\leq r_v$. In the implementation from \cite{michel2029}, edges are unweighted. We made a slight modification, letting the weight of $(u,v)$ be $d_{\mathbb{T}^d}(u,v)$.

For modelling real-world graphs, it is suggested in \cite{michel2029} to let $1\leq d\leq 5$. Based on this, we fixed $d=4$. We performed $100$ independent experiments with $500$ graphs in each experiment. Each graph was generated from the DRGG model, with the values of $\alpha$ uniformly sampled from the interval $[5,10]$, and $n=200$ fixed. After generating the graphs, they were split into training and test data (in a $80:20$ ratio). Next, we computed the $p$-Rips filtration for each graph, and their persistent homology for different values of $p$. We also performed the same experiments using the software package Flagser \cite{lutgehetmann2020computing}. Flagser computes the persistent homology of the directed flag complex of a directed graph, whose simplices are ordered cliques. It supports custom filtrations where the value of a higher-dimensional simplex is determined from the filtration values of its boundary cells. We use the sum ($p=1$) and maximum ($p=\infty$) functions for this purpose. We choose Flagser as it is easily available, naturally handles weighted directed graphs, and has persistent homology generally distinct from that of the $p$-Rips filtration.

Persistence images were generated for $H_0$, ignoring infinite persistence pairs, as the graphs are almost surely connected \cite{michel2029}, finite persistence pairs in $H_1$, and infinite persistence pairs in $H_1$. The three persistence images of sizes $10$, $10\times 10$ and $10$, respectively, were then concatenated into a single feature vector in $\mathbb{R}^{120}$ representing the given graph. We fitted a standard LASSO regression model on these feature vectors and carried out $5$-fold cross-validation to find the optimal $L_1$-penality term $\alpha_{\text{LASSO}}$ and the Gaussian standard deviation $\sigma$ used for generating the persistence images. Statistics for the mean absolute error (MAE) score computed over the test data for all $100$ experiments are reported in \cref{fig:mae_drgg_flagser_and_prips}. Numerical results, including results for additional values of $p$, can be found in \cref{table:all_mae_scores_drgg_experiment} in \cref{sec:appendix_experimental_results}.\looseness=-1 

\begin{figure}
\centering
\includegraphics{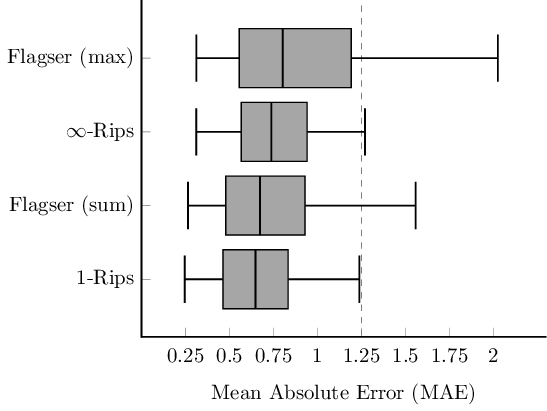}
\caption{Box plot showing mean absolute errors (MAE) for our filtration and Flagser over the $100$ experiments in our geometric graph regression problem. Note that outliers are not shown in the plot. The dashed line $x=1.25$ indicates the expected MAE for the constant function predicting $\hat{\alpha}=7.5$.}
\label{fig:mae_drgg_flagser_and_prips}
\end{figure}

Based on our experiments, the $1$-Rips persistence performed on average slightly better than the corresponding Flagser persistence on this particular task. The most significant difference was seen between using $p=1$ or $p=\infty$, both for $p$-Rips and Flagser.
\FloatBarrier

\subsubsection{Point Cloud Classification}\label{sec:point_cloud_classification}

Despite our method being motivated by graphs, our construction can also be applied to point cloud data, as explained in \cref{sec:constructing_p_graphs_from_data}. In this experiment, we investigate if the $p$-Rips filtration captures information not captured by the Vietoris-Rips filtration. We replicate, to some extent\footnote{Due to time limitations, we used $100$ iterations instead of the $1000$ iterations used in \cite{adams2017}.}, the experiment on parameter value classification in a discrete dynamical model presented in \cite{adams2017}. The \textit{linked twist map} is the discrete dynamical system defined by 
\begin{equation*}
\begin{cases}
x_{n+1} = x_n + ry_n(1-y_n)\\
y_{n+1} = y_n + rx_{n+1}(1-x_{n+1})
\end{cases}
\end{equation*}
computed modulo $1$ for some parameter $r>0$. For an initial point, $(x_0, y_0)$ sampled from the uniform distribution on the unit square, we performed $100$ iterations of the linked twist map to generate a point cloud with $100$ points. For each parameter value $r\in\{2.5,\,3.5,\,4.0,\,4.1,\,4.3\}$, we generated $50$ point clouds. We conducted $100$ independent experiments: Given $250$ point clouds ($50$ for each parameter value) and a fixed $p\geq 1$, we computed persistence diagrams for $H_0$ and $H_1$ using the $p$-Rips filtration with the Euclidean distance. The persistence diagrams were split into a training and test set (in a $70:30$ ratio). We then computed the corresponding persistence images of resolution $20\times 20$ pixels. Maximum birth and persistence values were computed over the training data and used to determine the image ranges. To ensure fair comparison between different values of $p$, we tried different values for $\sigma$, the standard deviation of the Gaussian kernel and picked the one that resulted in the highest accuracy. We concatenated the persistence images for $H_0$ and $H_1$, and used a bagging classifier with a decision tree base estimator to predict the parameter value $r$. We ended up with $\sigma=0.03$ as the optimal standard deviation for all values of $p$ tested.\looseness=-1

We observed a modest increase in accuracy when using $p=1$ compared to $p=\infty$, which might suggest that the filtration using $p<\infty$ captures some information not captured by the usual Vietoris-Rips filtration. To further investigate this hypothesis, we combined the persistence images from the filtrations with $p=1$ and $p=\infty$. We kept everything else the same as in the previous experiment. The optimal value for $\sigma$ found in this case was $0.04$. By combining persistence images, we achieved a mean classification accuracy of $59.32$ (see \cref{fig:accuracy_1_and_inf}). Numerical summary statistics for all values of $p$ we tried are reported in \cref{table:accuracies_point_clouds_different_p} in \cref{sec:appendix_experimental_results}. Based on these findings, we believe it can be beneficial to consider including features from filtrations where $p<\infty$. We note that the accuracy obtained here using $p$-Rips persistence is lower than in previous works \cite{carriere2020perslay, adams2017}, which is expected since our experiments use fewer and smaller point clouds and are designed to study the effect of the parameter $p$ rather than to optimize predictive performance.

\begin{figure}
\centering
\includegraphics{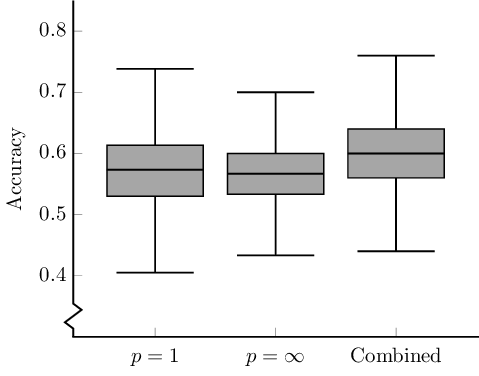}
\caption{Box plot showing accuracies for the different values of $p$ over $100$ experiments in our point cloud classification problem. Note that outliers are not shown in the plot.}
\label{fig:accuracy_1_and_inf}
\end{figure}
\FloatBarrier

\clearpage

\begin{appendices}
\section{Experimental Results and Omitted Proofs}\label{sec:appendix}
\subsection{Experimental Results}
\label{sec:appendix_experimental_results}
We here present the numerical results for the two experiments.
\begin{table}[ht]
\centering
\begin{tabular}{@{}lccccc@{}}
\toprule
\textbf{Filtration} & \textbf{Median} & \textbf{Lower Quartile} & \textbf{Upper Quartile} \\ \midrule
Flagser (max) & 0.803 & 0.555 & 1.192 \\
$\infty$-Rips & 0.738 & 0.567 & 0.941 \\ \midrule
Flagser (sum) & 0.673 & 0.479 & 0.929 \\
$1$-Rips & \textbf{0.647} & \textbf{0.463} & \textbf{0.833} \\ \midrule
$2$-Rips & 0.713 & 0.573 & 0.936 \\
$5$-Rips & 0.777 & 0.535 & 0.976 \\
$10$-Rips & 0.718 & 0.548 & 0.918 \\
$15$-Rips & 0.691 & 0.543 & 0.899 \\
$20$-Rips & 0.710 & 0.554 & 0.922 \\
\bottomrule
\end{tabular}
\caption{Summary of mean absolute errors (MAE) over the $100$ experiments in our geometric \textbf{graph regression} problem for each filtrations.}
\label{table:all_mae_scores_drgg_experiment}
\end{table}

\begin{table}[ht]
\centering
\begin{tabular}{@{}rccccc@{}}
\toprule
\textbf{$p$} & \textbf{Mean} & \textbf{Median} & \textbf{Lower Quartile} & \textbf{Upper Quartile} \\ \midrule
$1$ & 57.19 & 57.33 & 53.00 & 61.33 \\
$2$ & 55.25 & 55.33 & 52.00 & 58.67 \\
$5$ & 56.40 & 57.33 & 52.00 & 60.00 \\
$10$ & 56.53 & 56.67 & 53.33 & 60.00 \\
$15$ & 56.35 & 56.00 & 53.33 & 60.00 \\
$20$ & 56.39 & 56.00 & 53.33 & 60.00 \\
$\infty$ & 56.39 & 56.67 & 53.33 & 60.00 \\
$1$ and $\infty$ & \textbf{59.32} & \textbf{60.00} & \textbf{56.00} & \textbf{64.00} \\
\bottomrule
\end{tabular}
\caption{Mean, median, and the first and third quartile of the accuracies over the $100$ experiments for different values of $p$ in our \textbf{point cloud classification} problem. The last column represents accuracies from combining the persistence images for $1$-Rips and $\infty$-Rips.}
\label{table:accuracies_point_clouds_different_p}
\end{table}

\begin{table}[ht]
\centering
\begin{tabular}{r l l r r}
\toprule
\textbf{Nodes} & \textbf{Method} & \textbf{Filtration} & \textbf{Time [s] (average)} & \textbf{Filtration size (average)} \\
\midrule
\multirow{4}{*}{50}
 & \multirow{2}{*}{Flagser} & max & 0.0067 & \multirow{2}{*}{527.8} \\
 &                           & sum & 0.0049 &                         \\

 & \multirow{2}{*}{$p$-Rips} & max & 0.0235 & \multirow{2}{*}{1510.0} \\
 &                           & sum & 0.0246 &                         \\
\midrule
\multirow{4}{*}{100}

 & \multirow{2}{*}{Flagser} & max & 0.0393 & \multirow{2}{*}{2061.6} \\
 &                           & sum & 0.0367 &                         \\
 
 & \multirow{2}{*}{$p$-Rips} & max & 0.2133 & \multirow{2}{*}{9937.8} \\
 &                           & sum & 0.2176 &                         \\
\midrule
\multirow{4}{*}{150}


 & \multirow{2}{*}{Flagser} & max & 0.2985 & \multirow{2}{*}{4601.0} \\
 &                           & sum & 0.3076 &                         \\
 
 & \multirow{2}{*}{$p$-Rips} & max & 0.9642 & \multirow{2}{*}{31613.4} \\
 &                           & sum & 0.9613 &                         \\
\midrule
\multirow{4}{*}{200}
 
 & \multirow{2}{*}{Flagser} & max & 1.1741 & \multirow{2}{*}{8164.2} \\
 &                           & sum & 0.9929 &                         \\
 

 & \multirow{2}{*}{$p$-Rips} & max & 3.3940 & \multirow{2}{*}{72797.8} \\
 &                           & sum & 3.2444 &                         \\
\midrule
\multirow{4}{*}{250}

 & \multirow{2}{*}{Flagser} & max & 3.9645 & \multirow{2}{*}{12664.8} \\
 &                           & sum & 3.7899 &                         \\
 

 & \multirow{2}{*}{$p$-Rips} & max & 10.5466 & \multirow{2}{*}{137916.4} \\
 &                           & sum & 10.3307 &                         \\
\midrule
\multirow{4}{*}{300}

 & \multirow{2}{*}{Flagser} & max & 18.8565 & \multirow{2}{*}{18271.2} \\
 &                           & sum & 13.2354 &                         \\
 

 & \multirow{2}{*}{$p$-Rips} & max & 31.8323 & \multirow{2}{*}{236695.8} \\
 &                           & sum & 30.4488 &                         \\
\bottomrule
\end{tabular}
\caption{%
Times to compute the filtration and persistent homology in dimension $0$ and $1$, and the number of simplices (filtration size), using the $p$-Rips filtration and Flagser with sum ($p=1$) and maximum ($p=\infty$) for randomly generated weighted directed graphs. Reported values are averages over 5 runs with different random seeds. Directed graphs were generated using the Erd\"os--R\'enyi model with edge probability 0.2, edge weights sampled uniformly in $[0.1, 1.0]$, and vertex weights set to zero.}\label{tab:filtration_runtimes}
\end{table}

\FloatBarrier

\subsection{Omitted Proofs and Results}\label{sec:proofs_appendix}
This section contains proofs and results omitted from the main text. First, we prove \cref{lem:graph_distance_metric} showing metric-like properties of the graph distance.
\begin{cleverproof}{lem:graph_distance_metric}\label{proof:graph_distance_metric}
We prove one property at a time.
  \begin{enumerate}[wide, labelindent=0pt]
    \item First, note that $\delta_\atensor(G,G')=e$ if and only if $\vec{\delta}_\atensor(G,G')=\vec{\delta}_\atensor(G',G)=e$. Furthermore, this is equivalent to having both $\lambda_{G(v,v')}\left(G'(v,v')\right)=e$ and $\lambda_{G'(v,v')}\left(G(v,v')\right)=e$ for all $v,v'\in V$. Since $\lambda_t$ is the left adjoint of $\mu_t$, this is again equivalent to $G(v,v')\leq G'(v,v')$ and $G'(v,v')\leq G(v,v')$ since $\mu_t(e)=t$ for all $t\in L$. As $\leq$ is antisymmetric, the result follows.
    \item This follows from the fact that $\sup\{s,t\}=\sup\{t,s\}$. 
    \item We start by considering the directed case. By \cref{lem:graph_distance_is_a_bound}, we have the inequalities $G''(v,v')\leq G(v,v')\atensor\vec{\delta}_\atensor(G,G'')$ and $G'(v,v')\leq G''(v,v')\atensor\vec{\delta}_\atensor(G'',G')$ for all $v,v'\in V$ so by \ref{cond:l1}
    \begin{equation*}
    G(v,v')\atensor \vec{\delta}_\atensor(G,G'')\atensor \vec{\delta}_\atensor(G'',G')\geq G''(v,v')\atensor \vec{\delta}_\atensor(G'',G')\geq G'(v,v')\quad\text{for all }v,v'\in V. 
    \end{equation*}
    This means that $\vec{\delta}_\atensor(G,G'')\atensor \vec{\delta}_\atensor(G'',G')$ is an upper bound of the set $\{\lambda_{G(v,v')}\left(G'(v,v')\right)\}_{v,v'\in V}$, and since $\vec{\delta}_\atensor(G,G')$ is the least upper bound we get $\vec{\delta}_\atensor(G,G')\leq\vec{\delta}_\atensor(G,G'')\atensor\vec{\delta}_\atensor(G'',G')$.
    
    Now, we can extend this to the undirected case by noting
    \begin{align*}
    \delta_\atensor(G,G')&=\sup\left\{\vec{\delta}_\atensor(G,G'),\vec{\delta}_\atensor(G',G)\right\}\\&\leq\sup\left\{\vec{\delta}_\atensor(G,G'')\atensor\vec{\delta}_\atensor(G'',G'),\vec{\delta}_\atensor(G',G'')\atensor\vec{\delta}_\atensor(G'',G)\right\}\\
    &\leq\sup\left\{\vec{\delta}_\atensor(G,G''),\vec{\delta}_\atensor(G'',G)\right\}\atensor\sup\left\{\vec{\delta}_\atensor(G'',G'),\vec{\delta}_\atensor(G',G'')\right\}\\&=\delta_\atensor(G,G'')\atensor\delta_\atensor(G'',G')
    \end{align*}
    where the first inequality uses the non-decreasing property of $\atensor$, and the second inequality follows from eq.\ \eqref{eq:supminkowski} and commutativity of $\atensor$. 

    That $(\mathcal{G}(V,L), \delta_{+_p})$ is an extended metric follows directly, using $a+_p b\leq a+b$ from equation \eqref{eq:minkowski_for_p_norm_counting_measure}.
  \end{enumerate}
\end{cleverproof}

We will use the following lemma in the proof of \cref{lem:generalized_network_distance_is_psuedo_metric_like} when showing that the generalized network distance satisfies a triangle inequality.

\begin{lemma}\label{lem:adjoint_sup_triangle_inequality}
Let $L=(L,\leq,\atensor,e)$ be a symmetric monoidal lattice and let $G,G'\colon V\times V\to L$ be $L$-graphs. Then
\begin{equation*}
    \vec{\delta}_\atensor(G,G')\leq\sup_{v,v'\in V}\lambda_{G(v,v')}\left(u\right)\atensor\sup_{v,v'\in V}\lambda_u\left(G'(v,v')\right)
\end{equation*}
for all $u\in L$.
\end{lemma}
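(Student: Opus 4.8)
The plan is to use the supremum defining $\vec{\delta}_\atensor(G,G')$ to reduce the inequality to a pointwise statement, and then to chase the Galois connection $(\lambda_t,\mu_t)$ from \cref{sec:galois_connections_and_adjunctions}. Fix $u\in L$ and abbreviate $A=\sup_{v,v'\in V}\lambda_{G(v,v')}(u)$ and $B=\sup_{v,v'\in V}\lambda_u(G'(v,v'))$. Since $\vec{\delta}_\atensor(G,G')=\sup_{v,v'\in V}\lambda_{G(v,v')}(G'(v,v'))$, it suffices to prove that $\lambda_{G(v,v')}(G'(v,v'))\leq A\atensor B$ for every pair $v,v'\in V$; taking the supremum over all such pairs then gives the claim.

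To establish the pointwise bound, I would fix $v,v'$ and write $t=G(v,v')$ and $s=G'(v,v')$. First, from $\lambda_t(u)\leq A$ (a term of the supremum defining $A$) and the adjunction $\lambda_t(x)\leq y\iff x\leq\mu_t(y)=t\atensor y$, I extract $u\leq t\atensor A$; likewise, $\lambda_u(s)\leq B$ gives $s\leq u\atensor B$. Second, I combine these using functoriality \ref{cond:l1} of $\atensor$ together with associativity: $s\leq u\atensor B\leq(t\atensor A)\atensor B=t\atensor(A\atensor B)$. Third, applying the adjunction in the opposite direction, $s\leq t\atensor(A\atensor B)=\mu_t(A\atensor B)$ is equivalent to $\lambda_t(s)\leq A\atensor B$, which is exactly the desired pointwise inequality.

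I do not expect a genuine obstacle here: the argument is a short diagram chase through the Galois connection, relying only on \cref{eq:left_adjoint_lambda_t}, the defining adjunction property, \ref{cond:l1}, and associativity of $\atensor$. The one place that deserves care is keeping straight the direction of each adjunction step — when to pass from $\lambda_t(x)\leq y$ to $x\leq t\atensor y$ and when to go back — so in the write-up I would state explicitly which of $\lambda_t$ and $\mu_t$ is invoked at each stage.
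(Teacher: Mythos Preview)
Your proposal is correct and follows essentially the same route as the paper's proof: both arguments pass through the adjunction $\lambda_t\dashv\mu_t$ twice to obtain $u\leq G(v,v')\atensor A$ and $G'(v,v')\leq u\atensor B$, combine these via \ref{cond:l1} and associativity, and then apply the adjunction once more to get the pointwise bound before taking the supremum. Your abbreviations $A,B,t,s$ make the write-up a bit cleaner, but the logical structure is identical.
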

\begin{proof}
Let $u\in L$. From \cref{eq:left_adjoint_lambda_t} we have $\lambda_{G(v,v')}(u)=\inf\left\{c\in L\mid u\leq G(v,v')\atensor c\right\}$, and since $\lambda_t$ is left adjoint to $\mu_t$, it follows that 
\begin{equation*}
u\leq G(v_0,v'_0)\atensor\sup_{v,v'\in V}\lambda_{G(v,v')}(u).
\end{equation*}
for all $v_0,v'_0\in V$. A similar argument gives that
\begin{equation*}
G'(v_0,v'_0)\leq u\atensor\sup_{v,v'\in V}\lambda_u\left(G'(v,v')\right).    
\end{equation*}
for all $v_0,v'_0\in V$. By combining the above inequalities, we get that
\begin{equation*}
G'(v_0,v'_0)\leq G(v_0,v'_0)\atensor\left(\sup_{v,v'\in V}\lambda_{G(v,v')}(u)\atensor\sup_{v,v'\in V}\lambda_u\left(G'(v,v')\right)\right), 
\end{equation*}
and by applying the adjunction once more
\begin{equation*}
\lambda_{G(v_0,v'_0)}\left(G'(v_0,v'_0)\right)\leq\sup_{v,v'\in V}\lambda_{G(v,v')}(u)\atensor\sup_{v,v'\in V}\lambda_u\left(G'(v,v')\right)
\end{equation*}
for all $v_0,v'_0\in V$. Since $\vec{\delta}_\atensor(G,G')$ is the least such upper bound, the result follows.
\end{proof}

We now prove \cref{lem:generalized_network_distance_is_psuedo_metric_like} showing that the generalized network distance has pseudometric-like properties.

\begin{cleverproof}{lem:generalized_network_distance_is_psuedo_metric_like}\label{proof:generalized_network_distance_is_psuedo_metric_like}
We let $\supa\colon T^m\to T$ denote the map $(t_1,\ldots,t_m)\mapsto\sup_it_i$.
\begin{enumerate}[wide, labelindent=0pt]
\item Suppose $G_1=G_2\colon V\times V\to T^m$. Letting $C$ be the diagonal in $V\times V$ with projection maps $\pi_i$, we get that $\pi_1^*G_1=\pi_2^*G_2$. Thus, by \cref{lem:graph_distance_metric} $\delta_\atensor(\pi_1^*G_1,\pi_2^*G_2)=e$ so the generalized network distance is $e$ by \ref{cond:l2} and the fact that $\supa(e)=e$ in $T$.
\item Suppose $G_1$ and $G_2$ have vertex sets $V_1$ and $V_2$, respectively. The bijection $V_1\times V_2\to V_2\times V_1$ sending $(v_1,v_2)$ to $(v_2,v_1)$ induces a bijection between $\mathcal{C}(V_1,V_2)$ and $\mathcal{C}(V_2,V_1)$. Consequently, the result follows since $\delta_\atensor$ is symmetric by \cref{lem:graph_distance_metric}.
\item Let $G_i\colon V_i\times V_i\to T^m$ be $T^m$-graphs for $i=1,2,3$. Suppose we have correspondences $\widetilde{C}\in\mathcal{C}(V_1,V_3)$ and $\widehat{C}\in\mathcal{C}(V_3,V_2)$ with projection maps $\widetilde{\pi}_i$ and $\widehat{\pi}_i$, respectively. Define the composite correspondence
\begin{equation}\label{eq:composite_correspondence}
    C:=\widehat{C}\circ\widetilde{C}=\left\{(v_1,v_2)\in V_1\times V_2\mid \exists v_3\in V_3\text{ such that }(v_1,v_3)\in\widetilde{C}\text{ and }(v_3,v_2)\in\widehat{C}\right\}
\end{equation}
with projection maps $\pi_i$. To see that $C$ is in fact a correspondence is straightforward. Hence, we have that $\operatorname{d}_\mathcal{N}^\atensor(G_1,G_2)\leq \supa(\delta_\atensor(\pi_1^*G_1,\pi_2^*G_2))$. What remains to show is that
\begin{equation}\label{eq:graph_distance_inequality_correspondences}
\delta_\atensor(\pi_1^*G_1,\pi_2^*G_2)\leq\delta_\atensor(\widetilde{\pi}_1^*G_1,\widetilde{\pi}_2^*G_3)\atensor\delta_\atensor(\widehat{\pi}_1^*G_3,\widehat{\pi}_2^*G_2)
\end{equation}
as the result then follows from taking infimums over all correspondences since $s\leq t\atensor r$ in $T^m$ implies $\supa(s)\leq \supa(t)\atensor \supa(r)$ in $T$. By \cref{lem:adjoint_sup_triangle_inequality}, we have
\begin{equation*}
\vec{\delta}_\atensor(\pi_1^*G_1,\pi_2^*G_2)\leq\sup_{(v_1,v_2),(v'_1,v'_2)\in C}\lambda_{G_1(v_1,v'_1)}\left(u\right)\atensor\sup_{(v_1,v_2),(v'_1,v'_2)\in C}\lambda_u\left(G_2(v_2,v'_2)\right)    
\end{equation*}
for all $u$ in $T^m$. In particular, it holds for $u=G_3(v_3,v'_3)$ for all $v_3,v'_3\in V_3$ witnessing $(v_1,v_2),(v'_1,v'_2)\in C$ in the sense of \cref{eq:composite_correspondence}. Hence,
\begin{align*}
\vec{\delta}_\atensor(\pi_1^*G_1,\pi_2^*G_2)&\leq\qquad\sup_{\mathclap{\overset{\,}{(v_1,v_2),(v'_1,v'_2)\in C}}}\quad\lambda_{G_1(v_1,v'_1)}\left(G_3(v_3,v'_3)\right)\quad\atensor\quad\sup_{\mathclap{\overset{\,}{(v_1,v_2),(v'_1,v'_2)\in C}}}\quad\lambda_{G_3(v_3,v'_3)}\left(G_2(v_2,v'_2)\right)\\
&\leq\qquad\sup_{\mathclap{\overset{\,}{(v_1,v_3),(v'_1,v'_3)\in \widetilde{C}}}}\quad\lambda_{G_1(v_1,v'_1)}\left(G_3(v_3,v'_3)\right)\quad\atensor\quad\sup_{\mathclap{\overset{\,}{(v_3,v_2),(v'_3,v'_2)\in\widehat{C}}}}\quad\lambda_{G_3(v_3,v'_3)}\left(G_2(v_2,v'_2)\right)\\
&=\vec{\delta}_\atensor(\widetilde{\pi}_1^*G_1,\widetilde{\pi}_2^*G_3)\atensor\vec{\delta}_\atensor(\widehat{\pi}_1^*G_3,\widehat{\pi}_2^*G_2)
\end{align*}
By a similar argument to the one in the proof of \cref{lem:graph_distance_metric} we have shown that \cref{eq:graph_distance_inequality_correspondences} holds.
\end{enumerate}

That $\operatorname{d}^{+_p}_{\mathcal{N}}$  is an extended pseudo-metric on $\Gph([0,\infty]^m)$ follows directly, using $a+_p b\leq a+b$ from equation \eqref{eq:minkowski_for_p_norm_counting_measure}.
\end{cleverproof}

\subsection{Products of Symmetric Monoidal and Duoidal Lattices}\label{sec:productappendix}
Consider two symmetric monoidal lattices $(L_1,\leq_1,\tensor_1,e_1)$ and $(L_2,\leq_2,\tensor_2,e_2)$. For $i=1,2$, we write $\pi_i:L_1\times L_2\to L_i$ for the projection $\pi_i(l_1,l_2)=l_i$. We define the coordinatewise product $(L_1\times L_2,\leq,\tensor,(e_1,e_2))$ by
\begin{itemize}
    \item $l\leq l'$ whenever $\pi_i(l)\leq_i \pi_i(l')$ for $i=1,2$ and
    \item $l \tensor l'= (\pi_1(l)\tensor_1 \pi_1(l'), \pi_2(l)\tensor_2 \pi_2(l'))$.
\end{itemize}

Note that $\pi_i(l\tensor l')=\pi_i(l)\tensor_i\pi_i(l')$ for $i=1,2$, and $a=b$ if and only if $\pi_i(a)=\pi_i(b)$ for $i=1,2$. These facts help us to show the following:

\begin{proposition}
    The coordinatewise product $(L_1\times L_2,\leq,\tensor,(e_1,e_2))$ is a symmetric monoidal lattice. 
\end{proposition}
\begin{proof}
    \textit{Lattice:} Let $S\subseteq L_1\times L_2$. The point $(\inf \pi_1(S), \inf \pi_2(S))\leq s$ for all $s\in S$. Furthermore, if $d\leq s$ for all $s\in S$, then $\pi_i(d)\leq \pi_i(s)$ for all $\pi_i(s)\in\pi_i(S)$, thus $d\leq (\inf \pi_1(S), \inf \pi_2(S))$. So, $\inf S=(\inf \pi_1(S), \inf \pi_2(S))$, and similarly $\sup S=(\sup \pi_1(S), \sup \pi_2(S))$.

    \textit{Commutative monoid:} 
    For commutativity, $\pi_i(a\tensor b)= \pi_i(a)\tensor_i \pi_i(b)=\pi_i(b)\tensor_i \pi_i(a)=\pi_i(b\tensor a)$ for $i=1,2$. For associativity, we have 
    $
        \pi_i(a\tensor (b\tensor c))=\pi_i(a)\tensor_i \pi_i(b\tensor c)=\pi_i(a)\tensor_i (\pi_i(b)\tensor_i\pi_i(c))=(\pi_i(a)\tensor_i \pi_i(b))\tensor_i\pi_i(c)=\pi_i((a\tensor b)\tensor c)
    $
    for $i=1,2$. Finally, $(l_1,l_2)\tensor (e_1,e_2)=(l_1\tensor_1 e_1,l_2\tensor_2 e_2)=(l_1,l_2)$. 

    \textit{Symmetric monoidal lattice:} Starting with \ref{cond:l2}, we have $e=\inf (L_1\times L_2) = (\inf L_1, \inf L_2)=(e_1,e_2)$. For \ref{cond:l1}, let $a\leq a'$ and $b\leq b'$ in $(L_1\times L_2,\leq)$. In particular, for $i=1,2$, we have $\pi_i(a)\leq \pi_i(a')$ and $\pi_i(b)\leq \pi_i(b)$. By the \ref{cond:l1} property for $\tensor_i$ we have  $\pi_i(a\tensor b)=\pi_i(a) \tensor_i \pi_i(b)\leq \pi_i(a')\tensor_i \pi_i(b')=\pi_i(a'\tensor b')$ for $i=1,2$, so $a\tensor b\leq a'\tensor b'$.
\end{proof}

Now, if we have two symmetric duoidal lattices $(L_1,\leq_1,\tensor_1,\oplus_2,e_1)$ and $(L_2,\leq_2,\tensor_2,\oplus_2,e_2)$, we have the coordinatewise product $(L_1\times L_2,\leq,\tensor,\oplus,(e_1,e_2))$, where both $(L_1\times L_2,\leq,\tensor,(e_1,e_2))$ and $(L_1\times L_2,\leq,\oplus,(e_1,e_2))$ are defined coordinatewise.

\begin{proposition}
    The coordinatewise product $(L_1\times L_2,\leq,\tensor,\oplus,(e_1,e_2))$ is a symmetric duoidal lattice.
\end{proposition}
\begin{proof}
    We have yet to show the \ref{cond:bl2} property. Let $a,b,c,d\in L_1\times L_2$. The inequalities $(\pi_i(a)\atensor_i \pi_i(b))\tensor_i(\pi_i(c)\atensor_i \pi_i(d))\leq_i (\pi_i(a)\tensor_i \pi_i(c))\atensor_i(\pi_i(b)\tensor_i \pi_i(d))$ holds for $i=1,2$. Since $\pi_i$ commutes with both $\tensor$ and $\oplus$, we get $\pi_i((a\atensor b)\tensor (c\atensor d))\leq_i \pi_i((a\tensor c)\atensor(b\tensor d))$.
\end{proof}
\end{appendices}

\clearpage

\bibliographystyle{plainurl}
\bibliography{bibliography}

\end{document}